\newtheorem{thm}{Theorem}[section]
\newtheorem{prop}[thm]{Proposition}
\newtheorem{lem}[thm]{Lemma}
\newtheorem{cor}[thm]{Corollary}
\newtheorem{rem}[thm]{Remark}
\newtheorem{exam}[thm]{Example}
 \def\O{\Omega}
\def\og{\overline{G}}
\def\di{\bigm|} \def\lg{\langle} \def\rg{\rangle}
\def\nd{\mathrel{\bigm|\kern-.7em/}}
\def\f{\noindent}
\def\mod{\hbox{\rm mod }}
\def\qed{\hfill $\Box$}
\begin{document}
\title{Finite ${\mathcal DC}$-groups
\thanks{This research was partially supported by the National Natural Science Foundation of China (nos.11771191, 11771258)}}
\author{ Dandan Zhang$^{1}$,
 Haipeng Qu$^2$ and Yanfeng Luo$^1$\thanks{corresponding author}\\
\footnotesize\em $1$. School of Mathematics and Statistics, Lanzhou University,\\
\footnotesize\em Lanzhou 730000, P. R. China\\
\footnotesize\em e-mail: luoyf@lzu.edu.cn \\
\footnotesize\em $2.$ School of Mathematics and Computer Science, Shanxi Normal University,\\
\footnotesize\em Linfen Shanxi 041004, P. R. China}
\date{}
\maketitle

\begin{abstract}
Let $G$ be a group and ${\rm DS}(G)=\{H'\ |\ H\leq G\}$. $G$ is said to be a ${\mathcal DC}$-group if ${\rm DS}(G)$ is a chain. In this paper, we prove that a finite ${\mathcal DC}$-group is a semidirect product of a Sylow $p$-subgroup and an abelian $p'$-subgroup. For the case of $G$ being a finite $p$-group, we obtain some properties of a ${\mathcal DC}$-group. In particular, a ${\mathcal DC}$ $2$-group is characterized.
Moreover, we prove that ${\mathcal DC}$-groups are metabelian for $p\leqslant 3$ and give an example that a non-abelian ${\mathcal DC}$-group is not be necessarily metabelian for $p\geqslant 5$.

\medskip
\noindent{\bf Keywords}~~finite $p$-groups, lattices of subgroups, derived subgroups, generator of derived subgroups.

\medskip
\noindent{\it 2010 Mathematics subject classification$:$} 20D15, 20D30, 20F05, 20F14.
\end{abstract}

\baselineskip=16pt

\section{Introduction}

For a group $G$, the set ${\rm L}(G)$ of all subgroups of $G$ is a lattice with respect to set inclusion. Let ${\rm DS}(G)=\{H'\ |\ H\leq G\}$.
Then ${\rm DS}(G)$ is a subset of $ {\rm L}(G)$. However, in general, it is not a sublattice of ${\rm L}(G)$. Example is as follows.

\begin{exam}
Let $G$ be a symmetric group $S_{6}$. Then~~${\rm DS}(G)$ is not a sublattice of~~${\rm L}(G)$.
\end{exam}

\begin{proof}
Let $N=\lg (123),(456)\rg$, $H=N_G(N)$ and $L=\lg (1425)(36), (12)\rg$.
Then $H=N\rtimes L$. It is easy to see that $H'=N\rtimes L'=N\rtimes \lg (12)(45)\rg$. Let $K=A_5$. Then $K'=K$. Clearly $H'\cap K'=\lg (123),(12)(45)\rg\cong S_3$.
If there exists a subgroup $S$ satisfying $S'=H'\cap K'$, then $S\le N_G(S')=N_G(H'\cap K')=\lg (123),(12),(45)\rg\cong S_3\times {\rm C}_2$.
Hence $|S'|\leqslant |(S_3\times {\rm C}_2)'|=3$. This contradicts $|S'|=6$.
Thus ${\rm DS}(G)$ is not a sublattice.
\end{proof}

Naturally, we focus on  the case that ${\rm DS}(G)$ is a sublattice. For the reason that chain is the simplest lattice, we turn our attention to the study of groups $G$ satisfying ${\rm DS}(G)$ is a chain. Obviously, ${\rm DS}(G)$ is a chain is equivalent to $H'\leq K'$ or $K'\leq H'$ for any $H, K\leq G$. For convenience, we call such groups ${\mathcal DC}$-groups and use $G\in {\mathcal DC}$ to denote $G$ is a ${\mathcal DC}$-group.
It is not difficult to prove that a finite ${\mathcal DC}$-group $G$ is a semidirect product of a Sylow $p$-subgroup and an abelian $p'$-subgroup of $G$.
Hence the study of finite ${\mathcal DC}$-groups can be reduced to that of finite ${\mathcal DC}$ $p$-groups, where a finite ${\mathcal DC}$ $p$-group means it is a ${\mathcal DC}$-group and a finite $p$-group. The main results in this paper are as follows.

\begin{thm}\label{thm1.2}
Assume $G$ is a finite ${\mathcal DC}$ $p$-group. Then $d(G')\leqslant p$. In particular, if $d(G')=p$, then $G'\cong {\rm C}^{p}_{p}$.
\end{thm}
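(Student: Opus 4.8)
The plan is to extract enough structure from the $\mathcal DC$ condition to pin down $G'$ as a uniserial module, then reduce to a $2$-generated situation, and finally obtain the bound from a single cleverly chosen subgroup.

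\emph{Step 1 (structure of ${\rm DS}(G)$).} Conjugation permutes ${\rm DS}(G)$, and a finite chain has no nontrivial order-automorphism, so every member of ${\rm DS}(G)$ is normal in $G$. I would first show that for any $\mathcal DC$ $p$-group $K$ one has $\gamma_3(K)\in{\rm DS}(K)$: the subgroup $\gamma_3(K)=[K',K]$ is generated by the subgroups $[\langle g\rangle,K']\leqslant(\langle g\rangle K')'$ as $g$ runs through a generating set of $K$, the $(\langle g\rangle K')'$ lie in the chain ${\rm DS}(K)$, so their join is their maximum $D\in{\rm DS}(K)$, and since $D$ is normal in $K$ and squeezed between the $[\langle g\rangle,K']$ and $\gamma_3(K)$ we get $D=\gamma_3(K)$. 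Iterating (applied to the members of ${\rm DS}(G)$) gives $\gamma_i(G)\in{\rm DS}(G)$ for all $i\geqslant2$. Next, for a $\mathcal DC$ $p$-group $K$ the quotient $K'/\gamma_3(K)$ is cyclic: in $K/\gamma_3(K)$ the derived subgroup is central and generated by the commutators of generators, each of which generates a cyclic member of ${\rm DS}$, so by the chain condition they all lie in one cyclic subgroup. Applying this to an $H\leqslant G$ with $H'=\gamma_i(G)$ and using $[\gamma_i(G),H]\leqslant\gamma_{i+1}(G)$ shows $\gamma_i(G)/\gamma_{i+1}(G)$ is cyclic for every $i\geqslant2$; hence $G'$ has a central series with cyclic factors and $d(G')\leqslant c-1$, where $c$ is the nilpotency class.

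\emph{Step 2 (reduction and module picture).} Since $\Phi(G')=(G')^pG''$ is normal in $G$ and $G/\Phi(G')$ is again $\mathcal DC$ with $d\big((G/\Phi(G'))'\big)=d(G')$, I may assume $G'$ is elementary abelian; and as the top member $G'$ of ${\rm DS}(G)$ equals $\langle x,y\rangle'$ for suitable $x,y$, replacing $G$ by $\langle x,y\rangle$ I may also assume $G=\langle x,y\rangle$. Now all factors $\gamma_i/\gamma_{i+1}$ are cyclic and elementary, so there is nothing strictly between consecutive terms and ${\rm DS}(G)=\{\gamma_i(G):i\geqslant2\}$; in particular every derived subgroup of a subgroup of $G$ equals some $\gamma_i(G)$. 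As $G'$ is elementary abelian, $A:=G/C_G(G')$ is an abelian $p$-group; let $a,b$ be the images of $x,y$. The subchain $\{(c-1)G':c\in A\}$ has a maximum, which (because $A=\langle a,b\rangle$) is $(a-1)G'$ or $(b-1)G'$; say $\gamma_3(G)=(a-1)G'$. Commutativity of $A$ then upgrades this: applying the operator $(a-1)$ to $(c-1)G'\subseteq(a-1)G'$ gives $\gamma_{i+2}(G)=(a-1)^iG'$ for all $i\geqslant0$. Finally, the $\mathcal DC$ condition applied to the subgroups $\langle x\rangle U$ ($U$ an $\langle a\rangle$-submodule of $G'$) forbids two Jordan blocks of size $\geqslant2$ for the action of $a$; combined with $\dim G'=\min\{N:(a-1)^NG'=0\}$ this forces $G'\cong J_m$, a single Jordan block of $\langle a\rangle$, with $m=\dim G'=d(G')$ and $w:=[x,y]\notin(a-1)G'$ a cyclic vector.

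\emph{Step 3 (the bound).} Suppose $m\geqslant p+1$ and look at $H=\langle x^p,y\rangle$. The standard expansion $[x^p,y]=\prod_{i=0}^{p-1}[x,y]^{x^i}$, together with $\sum_{i=0}^{p-1}a^i=(a-1)^{p-1}$ in characteristic $p$, gives $[x^p,y]=(a-1)^{p-1}w$, whence $H'=(a-1)^{p-1}\mathbb F_p[\langle a^p,b\rangle]w$, which is spanned by the vectors $(a-1)^{p-1+s}w$ with $s$ running over the numerical semigroup generated by $p$ and by $k$, where $k$ is the $(a-1)$-adic value of $b-1$. If $k\geqslant2$ then $1$ is not in that semigroup, so $(a-1)^pw\notin H'$; but $(a-1)^{p-1}w\in H'$ forces $H'=\gamma_j(G)$ with $j\leqslant p+1$, hence $H'\supseteq\gamma_{p+1}(G)\ni(a-1)^pw$ — a contradiction. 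The remaining case $k=1$, i.e. $(a-1)G'=(b-1)G'$, I handle by replacing $x$ with $xy^{-t}$ for the appropriate $t\in\{1,\dots,p-1\}$: then the $(a-1)$-value of the new second generator is at least $2$, $(b-1)G'$ becomes the larger of the two, and the previous paragraph (with the roles of the two generators interchanged) gives the same contradiction. Hence $m=d(G')\leqslant p$.

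\emph{Step 4 (the extremal case).} If $d(G')=p$ I must show $\Phi(G')=1$. I would take a minimal counterexample; choosing a central $N\cong{\rm C}_p$ inside $\Phi(G')$ and passing to $G/N$ (which still has $d=p$) reduces to $\Phi(G')=N\cong{\rm C}_p\leqslant Z(G)$, so $|G'|=p^{p+1}$, the class is $p+1$ or $p+2$, every factor $\gamma_i/\gamma_{i+1}$ is $\cong{\rm C}_p$, and ${\rm DS}(G)=\{\gamma_i(G)\}$. One then derives a contradiction from the chain condition: the extra central bottom level of $G'$ provides enough room to produce, via commutators of suitable elements of the form $g^{p}$ and $h$, two derived subgroups of subgroups that are incomparable. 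I expect Steps 3 and 4 to be the crux — the numerical-semigroup bookkeeping and the generator change in Step 3, and (more seriously) making the incomparability in Step 4 fully explicit.
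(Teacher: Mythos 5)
Your Steps 1--3 give a correct proof of the first assertion $d(G')\leqslant p$, and by a route genuinely different from the paper's. The paper gets the bound by first proving that a $\mathcal{DC}$ $p$-group with an abelian maximal subgroup has $d(G')\leqslant p-1$ (via the metabelian expansion $[a,b^p]=e_p=1$), and then running a minimal-counterexample argument on $|G':\mho_1(G')|$ using a maximal subgroup $M$ with $|M'|$ minimal, the ${\rm N/C}$ theorem, and the fact that $H'\cap Z(H)\cong {\rm C}_p$ when $\exp(H')=p$. You instead reduce to $G=\langle x,y\rangle$ with $G'$ elementary abelian, observe that ${\rm DS}(G)=\{\gamma_i(G)\}$ forces $G'$ to be a single Jordan block for the dominant generator, and read off the contradiction for $m\geqslant p+1$ from the valuation of $[x^p,y]=(a-1)^{p-1}w$; the generator change $x\mapsto xy^{-t}$ in the case $(a-1)G'=(b-1)G'$ is the right fix. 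This module-theoretic packaging is cleaner and subsumes the paper's Lemma on abelian maximal subgroups; the paper's version does prove the slightly stronger statement $|G':\mho_1(G')|\leqslant p^p$, which your reduction to the elementary abelian quotient also yields if you track it. Two small points to tighten: the ``iteration'' giving $\gamma_{i+1}(G)\in{\rm DS}(G)$ should be run on the subgroups $\langle g\rangle H'$ where $H'=\gamma_i(G)$, not literally ``on the members of ${\rm DS}(G)$''; and the numerical-semigroup bookkeeping is more safely phrased as: every element of $H'$ is $\lambda(a-1)^{p-1}w$ plus terms of valuation at least $p+1$, so $(a-1)^pw\notin H'$.

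Step 4, however, is a genuine gap: the second assertion ($d(G')=p\Rightarrow G'\cong{\rm C}_p^p$) is not proved. You set up the minimal counterexample correctly ($|G'|=p^{p+1}$, $\Phi(G')\cong{\rm C}_p$ central) and correctly guess that the contradiction comes from a subgroup $\langle g^p,h\rangle$ whose derived subgroup violates the chain, but you do not produce it, and you say so yourself. Moreover your description of the setup contains an error that signals where the real work lies: it is \emph{not} true that every factor $\gamma_i/\gamma_{i+1}$ is ${\rm C}_p$ there. When ${\rm cl}(G)=p+1$ one has $\gamma_2/\gamma_3\cong{\rm C}_{p^2}$ (by cyclicity of the factors and Blackburn's monotonicity of their exponents), and the paper must split into the two cases ${\rm cl}(G)=p+2$ (all factors of order $p$, $K_{p+2}(G)=\Phi(G')$) and ${\rm cl}(G)=p+1$ (top factor ${\rm C}_{p^2}$), choosing the elements $a$ and $b$ differently in each case, using centralizers $C_G(K_i(G)/K_{i+2}(G))$, a regularity argument for $G/K_p(G)$, and the location of $\mho_1(G')$ relative to $K_3(G)$ to force $|H'|=p$ while $H'$ sits at the wrong level of the lower central series. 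None of this is routine from your setup; in particular the Jordan-block picture of Step 3 is no longer available once $G'$ is not elementary abelian, which is precisely the situation Step 4 must handle. As it stands the proposal establishes $d(G')\leqslant p$ but not the ``in particular'' clause.
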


\begin{thm}\label{DC2-group}
Assume $G$ is a finite $2$-group. Then $G$ is a ${\mathcal DC}$-group if and only if $G'$ is cyclic or $G'\cong {\rm C}^{2}_{2}$ and ${\rm cl}(G)=3$.
\end{thm}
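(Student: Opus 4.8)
The plan is to treat the two directions separately. The ``only if'' direction is the substantive one, and I would approach it via Theorem \ref{thm1.2}: since $p=2$, that result forces $d(G')\leqslant 2$, and if $d(G')=2$ then $G'\cong {\rm C}_2\times{\rm C}_2$. So it remains to show that when $G'\cong {\rm C}_2\times{\rm C}_2$ we must have ${\rm cl}(G)=3$. First I would note that $G'\cong {\rm C}_2^2$ is elementary abelian and $G$ acts on it by conjugation; since $\Aut({\rm C}_2^2)\cong S_3$, the image of $G$ in $\Aut(G')$ is a $2$-group, hence of order at most $2$, so $[G':G]$ has index at most $2$ in... more precisely $[G,G']$ has index $1$ or $2$ in $G'$. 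If $[G,G']=1$ then $G'\leqslant Z(G)$, so $G$ is nilpotent of class $2$; I must rule this out. The key point is that in a class-$2$ group the derived subgroup is generated by commutators $[x,y]$ and the commutator map is bilinear, which (together with a counting/transversal argument on $G/Z(G)$) forces $G'$ to be cyclic when $G$ is a ${\mathcal DC}$-group — this is exactly the kind of statement I expect to have been established en route to Theorem \ref{thm1.2}, or which I would isolate as a preliminary lemma: \emph{a ${\mathcal DC}$ $p$-group of class $2$ has cyclic derived subgroup}. Granting that, $G'\cong {\rm C}_2^2$ forces class $\geqslant 3$; and since $[G,G']$ then has order $2$, we get $\gamma_3(G)=[G,G']\cong {\rm C}_2$ and $\gamma_4(G)=[G,\gamma_3(G)]\leqslant [G,Z\text{-part}]$; one checks $\gamma_3(G)\leqslant Z(G)$ because $G/C_G(G')$ has order $2$ and $\gamma_3(G)=[G,G']$ is the unique subgroup fixed, giving $\gamma_4(G)=1$, i.e. ${\rm cl}(G)=3$ exactly.

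For the ``if'' direction I would argue that both listed families lie in ${\mathcal DC}$. If $G'$ is cyclic, then for any $H,K\leqslant G$ the subgroups $H'$ and $K'$ are subgroups of the cyclic group $G'$, and subgroups of a finite cyclic group are totally ordered by inclusion, so ${\rm DS}(G)$ is a chain — this case is immediate. The remaining case is $G'\cong {\rm C}_2^2$ with ${\rm cl}(G)=3$: here ${\rm DS}(G)\subseteq {\rm L}(G')=\{1,\,A,\,B,\,C,\,G'\}$ where $A,B,C$ are the three subgroups of order $2$. I need to show that only one of $A,B,C$ — or none — actually occurs as some $H'$, so that together with $1$ and $G'$ the set ${\rm DS}(G)$ is a chain. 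Since ${\rm cl}(G)=3$, $\gamma_3(G)=[G,G']$ is a nontrivial proper subgroup of $G'$, hence has order $2$; call it $A$. The claim is that any $H\leqslant G$ with $H'$ of order $2$ has $H'=A=\gamma_3(G)$. To see this: if $H'$ has order $2$ and $H'\neq\gamma_3(G)$, then $H'\cap\gamma_3(G)=1$, so $H'$ maps injectively into $G'/\gamma_3(G)$, which lies in $Z(G/\gamma_3(G))$... the cleaner route is that $H/(H\cap\gamma_3(G))$ has derived subgroup of order $2$ inside the class-$2$ group $G/\gamma_3(G)$, and then invoke again the preliminary lemma (class-$2$ ${\mathcal DC}$ $\Rightarrow$ cyclic derived subgroup) applied to $G/\gamma_3(G)$, whose derived subgroup is $G'/\gamma_3(G)\cong {\rm C}_2$ — consistent — and track where $H'$ can sit. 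I would organize this so that the upshot is: every $H'$ of order $2$ equals $\gamma_3(G)$, hence ${\rm DS}(G)\subseteq\{1,\gamma_3(G),G'\}$, a chain.

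The main obstacle, and the step I would spend the most care on, is the class-$2$ reduction lemma — showing a ${\mathcal DC}$ $p$-group of nilpotency class $2$ has cyclic derived subgroup — and its use to pin down which order-$2$ subgroups of $G'\cong{\rm C}_2^2$ can arise as derived subgroups of subgroups $H\leqslant G$. If Theorem \ref{thm1.2} is proved by a route that already yields ``class $2$ $\Rightarrow$ $G'$ cyclic'' as a byproduct (which is very likely, since the structure $G'\cong{\rm C}_p^p$ in the $d(G')=p$ case is non-abelian-class behaviour), then I would simply cite it; otherwise I would prove it directly using the bilinearity of the commutator map on $G/Z(G)\times G/Z(G)\to G'$ and the fact that the image generates $G'$, combined with the ${\mathcal DC}$ hypothesis applied to two-generator subgroups $\langle x,y\rangle$, whose derived subgroups are cyclic (generated by a single commutator in the class-$2$ case) and must be totally ordered. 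The rest of the argument — handling the $\Aut({\rm C}_2^2)\cong S_3$ constraint, locating $\gamma_3$, and checking $\gamma_4=1$ — is routine once that lemma is in hand.
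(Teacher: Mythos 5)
Your ``only if'' direction is essentially the paper's argument: invoke Theorem \ref{thm1.2} to get $d(G')\leqslant 2$ and $G'\cong {\rm C}_2^2$ when $d(G')=2$; note $|K_3(G)|\leqslant 2$ forces $K_3(G)\leqslant Z(G)$ and hence ${\rm cl}(G)\leqslant 3$; and rule out class $2$ because then $G'=G'\cap Z(G)$ would be a non-cyclic central derived subgroup. Your ``class-$2$ reduction lemma'' is exactly Lemma \ref{Pxingzhi1}(3) of the paper in the case at hand (for class $2$, two commutators generating distinct order-$2$ subgroups give incomparable $\lg x,y\rg'$), so that half is sound.

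The genuine gap is in the ``if'' direction for the case $G'\cong {\rm C}_2^2$, ${\rm cl}(G)=3$. You correctly identify the goal --- every $H\leqslant G$ with $|H'|=2$ must have $H'=K_3(G)$ --- but your proposed route does not establish it. First, you suggest applying the class-$2$ lemma to $G/K_3(G)$; but that lemma is a statement about ${\mathcal DC}$-groups, and in this direction membership in ${\mathcal DC}$ is the \emph{conclusion}, not a hypothesis, so you cannot invoke it for $G$ or its quotients without circularity. Second, even setting that aside, you observe yourself that the outcome is ``consistent'': $G/K_3(G)$ has cyclic derived subgroup ${\rm C}_2$, which produces no contradiction, and ``track where $H'$ can sit'' is not an argument. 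The paper closes precisely this step with Schneider's result (Lemma \ref{lem:=G'=H'G_{3}}): if $H'\ne K_3(G)$ is nontrivial then $G'=H'K_3(G)$, whence $K_i(H)=K_i(G)$ for all $i\geqslant 2$ and in particular $H'=G'$, contradicting $|H'|=2$. Some such nontrivial input is needed here --- an elementary count of which order-$2$ subgroups of $G'$ occur as derived subgroups does not follow from bilinearity or the automorphism-group constraint alone --- so as written your proof of the ``if'' direction is incomplete.
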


Obviously, a finite ${\mathcal DC}$ $2$-group is metabelian. We also prove that a finite ${\mathcal DC}$ $3$-group is metabelian. However,
there exists finite non-abelian ${\mathcal DC}$ $p$-groups are non-metabelian for $p\geqslant 5$. The counterexample is given.

\medskip
Finally, it must be mentioned that the set $\{H'\ |\ H\leq G\}$ was introduced by de Giovanni and Robinson in \cite{de},
independently, by Herzog, Longobardi and Maj in \cite{Herzog}. They investigated
the structure of a group by imposing some assumptions on the set, for example, de Giovanni and Robinson in \cite{de} proved that
if $G$ is a locally graded group with the set being finite, then $G$ is finite-by-abelian(or $G'$ is finite).
Along the line of \cite{de} and \cite{Herzog}, by imposing different assumptions on the set, more results were obtained in \cite{de Mari1,de Mari2, Rinauro,Zarrin}.
We notice that these works mentioned above studied mainly infinite groups by imposing some assumptions on the set.
Our works are the first in this direction of finite $p$-groups.

\medskip
We use $\exp(G)$, ${\rm cl}(G)$ and $d(G)$ to denote the exponent, the nilpotency class and the minimal cardinality of generating set of $G$, respectively.
For any positive integer $s$,
we define
\begin{center}
$\Omega_{s}(G)=\lg a\in G\ |\ a^{p^{s}}=1\rg$ and $\mho_{s}(G)=\lg a^{p^{s}}\ | \ a\in G\rg$.
\end{center}
Denote by ${\rm C}_{p^n}$ and ${\rm C}_{p^n}^m$  the cyclic group of order $p^n$ and the direct product of $m$ copies  of  ${\rm C}_{p^n}$, respectively. If $A$ and $B$ are subgroups of $G$ with $G=AB$ and $[A,B]=1$, then $G$ is called a central product of $A$ and $B$, denoted by $G=A\ast B$. For a nilpotent group $G$, let
$$G=K_{1}(G)> G'=K_{2}(G)> K_{3}(G)> \cdots> K_{c+1}(G)=1.$$
denote the lower central series of $G$, where $K_{i+1}(G)=[K_{i}(G),G]$ and $c={\rm cl}(G)$.
For other notation and terminology the reader is referred to \cite{BJI}.

\section{Preliminaries}

In this section, we list some known results which be used in the sequel.

A finite group is said to be {\it minimal non-abelian} if it is not abelian but all of its proper subgroups are abelian. The following lemma \ref{minimal non-abelian} is a direct result of \cite[Kapitel III, \S5, Aufgaben 14]{Huppert}.

\begin{lem}\label{minimal non-abelian}
Let $G$ be a minimal non-abelian group. Then $G'$ is a $p$-group. Moreover, if $G$ is a minimal non-abelian $p$-group, then $|G'|=p$.
\end{lem}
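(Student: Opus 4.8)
The plan is to handle the two assertions of the lemma separately. The statement that $G'$ is a $p$-group is structural, and I would deduce it from the classical description of minimal non-abelian (hence of minimal non-nilpotent) groups; the statement that $|G'|=p$ when $G$ is a $p$-group admits a short elementary argument, which is the part I would write out in full.

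For the first assertion, note that every proper subgroup of $G$ is abelian, hence nilpotent. If $G$ is itself nilpotent, then $G$ is the direct product of its Sylow subgroups, and if there were two or more nontrivial factors each would be a proper subgroup, hence abelian, forcing $G$ abelian --- a contradiction. So in the nilpotent case $G$ is a $p$-group and $G'$ is a $p$-group. If $G$ is not nilpotent, then all of its proper subgroups being nilpotent makes $G$ a minimal non-nilpotent group, and by Schmidt's theorem (essentially the content of the cited Huppert exercise) $|G|=p^aq^b$ with $p\neq q$, the Sylow $p$-subgroup $P$ is normal, and the Sylow $q$-subgroup $Q$ is cyclic; then $G/P\cong Q$ is abelian, so $G'\ls P$, and again $G'$ is a $p$-group.

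For the second assertion, let $G$ be a minimal non-abelian $p$-group. Pick $a,b$ with $[a,b]\neq 1$; since $\lg a,b\rg$ is non-abelian it is not a proper subgroup, so $G=\lg a,b\rg$ and in particular $G$ is not cyclic. I then claim $G'\ls Z(G)$: the subgroup $\lg a,G'\rg$ cannot equal $G$, for otherwise $G/G'$ would be cyclic and (as $G$ is a $p$-group with $G'\ls\Phi(G)$) $G$ itself would be cyclic; hence $\lg a,G'\rg$ is proper, thus abelian, so $[a,G']=1$, and symmetrically $[b,G']=1$, whence $G'\ls C_G(a)\cap C_G(b)=C_G(\lg a,b\rg)=Z(G)$. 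Thus ${\rm cl}(G)=2$, so the commutator map is bi-multiplicative, $G'=\lg[a,b]\rg$ is cyclic, and $[a^p,b]=[a,b]^p$. Finally, since $G$ is not cyclic the subgroup $\lg b,\Phi(G)\rg$ is proper, and since $a^p\in\mho_1(G)\ls\Phi(G)$ the subgroup $\lg a^p,b\rg$ is proper, hence abelian, so $[a^p,b]=1$; therefore $[a,b]^p=1$, and as $[a,b]\neq 1$ we conclude $|G'|=|\lg[a,b]\rg|=p$.

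The only non-elementary ingredient is the structure theory quoted for the first assertion: that a minimal non-abelian group is solvable and involves at most two primes, together with the Schmidt-type decomposition $G=P\rtimes Q$. Reproving this from scratch would be the main obstacle, which is exactly why it is cleaner to cite Huppert; the $p$-group case needs nothing beyond the standard facts $G'\ls\Phi(G)$ and $\mho_1(G)\ls\Phi(G)$.
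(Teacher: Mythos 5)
Your proof is correct. Note that the paper itself gives no argument for this lemma at all: it is stated as ``a direct result of'' an exercise in Huppert (Kapitel III, \S 5, Aufgabe 14), so there is nothing in the paper to compare against line by line. Your treatment fills that gap in the standard way. The dichotomy for the first assertion is sound: if $G$ is nilpotent, the Sylow decomposition forces $G$ to be a $p$-group (two or more nontrivial Sylow factors would all be proper, hence abelian, making $G$ abelian); if not, $G$ is a Schmidt group and $G'\ls P$ follows from $G/P\cong Q$ being cyclic. As you say, the only non-elementary input is Schmidt's theorem, which is essentially what the cited Huppert exercise encapsulates, so invoking it is exactly parallel to what the authors do. Your second part is fully self-contained and correct: $G=\lg a,b\rg$, the subgroups $\lg a,G'\rg$ and $\lg b,G'\rg$ are proper (else $G=\lg a\rg\Phi(G)=\lg a\rg$ would be cyclic), hence abelian, giving $G'\ls Z(G)$ and ${\rm cl}(G)=2$; then $G'=\lg[a,b]\rg$, and $[a,b]^p=[a^p,b]=1$ because $\lg a^p,b\rg\ls\lg b,\Phi(G)\rg$ is proper, hence abelian. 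This yields $|G'|=p$ with nothing beyond $G'\ls\Phi(G)$ and $\mho_1(G)\ls\Phi(G)$, which is as elementary as one could hope for.
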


\begin{lem}\label{lem=3 grp of max class}{\rm (\cite[Lemma 2.4 ]{ZZLS} )}
Let $G$ be a $3$-group of maximal class. Then the fundamental subgroup $G_{1}$ of $G$ is either abelian or minimal non-abelian.
\end{lem}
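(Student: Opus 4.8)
The plan is to deduce the lemma from the classical structure theory of $3$-groups of maximal class, invoking Lemma~\ref{minimal non-abelian} only to finish.

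First I would dispose of small orders: if $|G|\le 3^{3}$ there is nothing to prove, and if $|G|=3^{4}$ then $|G_{1}|=3^{3}$, so $G_{1}$ is abelian or a non-abelian group of order $p^{3}$, and the latter is automatically minimal non-abelian since its proper subgroups have order at most $p^{2}$. So assume $|G|=3^{n}$ with $n\ge 5$ and fix the usual data: $K_{i}=K_{i}(G)$, an element $s\in G\setminus G_{1}$, an element $s_{1}\in G_{1}\setminus G'$, and $s_{i}=[s_{i-1},s]$ for $i\ge 2$. Then $G'=K_{2}=\lg s_{2},s_{3}\rg$, $K_{i}=\lg s_{i}\rg K_{i+1}$ for $2\le i\le n-1$, $G_{1}=\lg s_{1}\rg G'$, and, since $G_{1}$ is the common two-step centralizer ${\rm C}_{G}(K_{i}/K_{i+2})=\{g\in G\colon[g,K_{i}]\le K_{i+2}\}$, one has $[K_{i},G_{1}]\le K_{i+2}$ for all $i\ge 2$.

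Next I would locate $G_{1}'$ in the lower central series of $G$. From $G_{1}'=\lg[s_{1},G'],\,[G',G']\rg^{G_{1}}$ together with $[s_{1},G']\le[G_{1},G']\le K_{4}$ and $[G',G']\le K_{4}$, one gets $G_{1}'\le K_{4}$. Also $G_{1}'$ is characteristic in $G_{1}\trianglelefteq G$, hence $G$-invariant and contained in $K_{2}$; but in a $p$-group of maximal class the only $G$-invariant subgroups of $K_{2}$ form the chain $K_{2}>K_{3}>\dots>K_{n}=1$. (Induction on depth: if $1\ne N$ is such a subgroup and $j$ is largest with $N\le K_{j}$, then $NK_{j+1}=K_{j}$, so $[N,G]K_{j+2}=K_{j+1}$, hence $[N,G]\ne 1$ has depth exactly $j+1$; by induction $[N,G]=K_{j+1}$, and $K_{j+1}=[N,G]\le N$ forces $N=K_{j}$; the base case $j=n-1$ is clear since $|K_{n-1}|=p$.) Thus $G_{1}'=K_{j}$ for some $j$ with $4\le j\le n$, and the whole statement comes down to showing that \emph{when $G_{1}$ is non-abelian one has $d(G_{1})=2$ and $G_{1}'=K_{n-1}={\rm Z}(G)$}: for then $|G_{1}'|=p$, so $G_{1}'\le{\rm Z}(G_{1})$, $G_{1}$ has class $\le 2$, $\Phi(G_{1})$ is abelian and centralized by $G_{1}$, and since $G_{1}$ is $2$-generated each of its maximal subgroups has the form $\lg x\rg\Phi(G_{1})$ and is therefore abelian, i.e.\ $G_{1}$ is minimal non-abelian.

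It remains to establish that italicized assertion, and this is the step I expect to be the main obstacle; it is also precisely where the hypothesis $p=3$ enters. Note first that $d(G_{1})\le 3$ always: since $G_{1}/G'$ is cyclic of order $p$ we have $\Phi(G_{1})\le G'$, while $G''\le G_{1}'$ and $\mho_{1}(G')\le\mho_{1}(G_{1})$ give $\Phi(G')\le\Phi(G_{1})$, so $d(G_{1})=1+\dim_{{\mathbb F}_{p}}(G'/\Phi(G_{1}))\le 1+d(G')=3$. But $G_{1}'\le K_{4}$ alone only yields $|G_{1}'|\le 3^{\,n-4}$, and for $p\ge 5$ the assertion genuinely fails — this is what allows non-metabelian $\mathcal{DC}$ $p$-groups to exist for larger primes. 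For $p=3$ I would invoke Blackburn's analysis of $3$-groups of maximal class: once $n$ is beyond a small explicit bound, $G$ has positive degree of commutativity ($[K_{i},K_{j}]\le K_{i+j+1}$ for $i,j\ge 2$) and, outside a short list of degenerate types, the cubing map shifts indices by at least two ($\mho_{1}(K_{i})\le K_{i+2}$, so $s_{i}^{\,3}\equiv s_{i+2}$ modulo $K_{i+3}$). The second fact forces $K_{3}\le\mho_{1}(G_{1})\le\Phi(G_{1})$, whence $d(G_{1})=1+\dim(G'/\Phi(G_{1}))\le 1+\dim(G'/K_{3})=2$; and if $G_{1}'=K_{j}$ with $j\le n-2$, then $G_{1}'/K_{3}(G_{1})$ (recall $K_{3}(G_{1})=[K_{j},G_{1}]\le K_{j+2}$) surjects onto $K_{j}/K_{j+2}$, which is non-cyclic because cubing shifted by $\ge 2$, contradicting that a $2$-generated group has cyclic $K_{2}/K_{3}$ — so $j\ge n-1$. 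The finitely many $n$ below the bound, and the degenerate types, are exactly the cases in which $G_{1}$ is abelian, and are disposed of directly from Blackburn's classification. Together with the reductions above, this gives the lemma; the Blackburn-theory input is the real work, and the rest is bookkeeping.
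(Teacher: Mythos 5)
The paper offers no proof of this lemma at all: it is imported verbatim from \cite[Lemma 2.4]{ZZLS}, so there is no internal argument to measure yours against. Judged on its own terms, your skeleton is the right one: the $G$-invariant subgroups of $G'$ form the chain of the $K_i$, so $G_1'=K_j$ for some $j\geqslant 4$, and the lemma reduces to showing that a non-abelian $G_1$ satisfies $d(G_1)=2$ and $|G_1'|=3$ (a two-generator $p$-group with derived subgroup of order $p$ is indeed minimal non-abelian, as in Lemma~\ref{minimal non-abelian}). The use of $[K_i,G_1]\leqslant K_{i+2}$ is legitimate for $n\geqslant 5=p+2$ by the positive degree of commutativity (cf.\ the setting of Lemma~\ref{lem=max class}), and the small orders are handled correctly.

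The genuine gap sits exactly where you announce ``the real work'' and is not cosmetic. You need $\mho_1(K_i)\leqslant K_{i+2}$ both at the top of the series ($i=1$, to get $K_3\leqslant\Phi(G_1)$ and hence $d(G_1)=2$) and at the bottom ($i=n-2$, to know $K_{n-2}$ is elementary abelian rather than cyclic of order $9$); the standard Blackburn statement $\mho_1(G_i)=G_{i+p-1}$ is proved only for a restricted range of $i$ and, as you concede, only ``outside a short list of degenerate types,'' none of which you identify. Worse, your escape clause --- that the small $n$ and the degenerate types ``are exactly the cases in which $G_1$ is abelian'' --- is an unsupported assertion and is false as a blanket claim: already at orders $3^4$ and $3^5$ there are maximal class $3$-groups with $G_1$ non-abelian (extraspecial, resp.\ non-abelian metacyclic with $|G_1'|=3$), so the exceptional cases are precisely where the dichotomy must be verified, not where it holds vacuously. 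To close the argument one has to either quote Blackburn's explicit classification of $3$-groups of maximal class in \cite{maximal-class} (in particular that $G_1$ is metacyclic for $n\geqslant 5$, which yields $d(G_1)=2$ and $G_1'$ cyclic at once) or carry out the $\mho_1$-computations in the missing ranges. As written, the proposal is a plausible roadmap rather than a proof.
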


\begin{lem}\label{lem=max class}{\rm (\cite[Theorem 9.6(e)]{BJI})}
Let $G$ be a group of maximal class of order $p^n$, where $p>2$ and $n\geqslant p+2$. If $M$ is a maximal subgroup of $G$ and $M\ne G_1$, then $M$ is of maximal class, where $G_{1}=C_{G}(K_{2}(G)/K_{4}(G))$.
\end{lem}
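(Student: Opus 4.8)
\medskip
The plan is to prove that ${\rm cl}(M)=n-2$; since $|M|=p^{n-1}$ and any group of order $p^{n-1}$ has class at most $n-2$, this is precisely the assertion that $M$ is of maximal class. The engine is to interlace the lower central series of $M$ with that of $G$.

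First I would record the standard arithmetic of a $p$-group of maximal class: $|G:G'|=p^2$, $|K_i(G):K_{i+1}(G)|=p$ for $2\leqslant i\leqslant n-1$, $K_n(G)=1$, $K_{n-1}(G)=\Z(G)$ of order $p$, and $\Phi(G)=G'=K_2(G)$. In particular every maximal subgroup of $G$ contains $K_2(G)$; since $M\ne G_1$ while $|M|=|G_1|=p^{n-1}$, there is an element $x\in M\setminus G_1$, and then (as $x^p\in\Phi(G)=K_2(G)$) one has $M=\langle x\rangle K_2(G)$.

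The heart of the argument is the claim that $[K_i(G),x]=K_{i+1}(G)$ for every $2\leqslant i\leqslant n-2$. The inclusion $[K_i(G),x]\leqslant K_{i+1}(G)$ is automatic. For the reverse, recall $K_{i+1}(G)/K_{i+2}(G)$ has order $p$, so it suffices to show $x\notin C_G\!\left(K_i(G)/K_{i+2}(G)\right)$, which yields $[K_i(G),x]K_{i+2}(G)=K_{i+1}(G)$; running this downward from $i=n-2$ (where $K_n(G)=1$ already makes the coset statement an equality) and using $K_{i+2}(G)=[K_{i+1}(G),x]\leqslant[K_i(G),x]$ at each step, one upgrades it to $[K_i(G),x]=K_{i+1}(G)$. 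What this requires is the two-step centralizer identity $C_G\!\left(K_i(G)/K_{i+2}(G)\right)=G_1$ for $2\leqslant i\leqslant n-2$, which belongs to the structure theory of $p$-groups of maximal class (parts of \cite[Theorem 9.6]{BJI}). I expect establishing this — or locating the cleanest form of it to quote — to be the main obstacle, and it is exactly here that the hypotheses $p>2$ and $n\geqslant p+2$ are used: they force $G$ to have positive degree of commutativity (Blackburn's theorem), from which all the two-step centralizers collapse onto $G_1$. The hypothesis genuinely cannot be dropped, since for small $n$ there exist $p$-groups of maximal class having an abelian maximal subgroup different from $G_1$.

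Granting the claim, the conclusion follows by an upward induction. From $M=\langle x\rangle K_2(G)$ we get $K_2(M)=M'\supseteq[K_2(G),x]=K_3(G)$; and if $K_j(M)\supseteq K_{j+1}(G)$ for some $2\leqslant j\leqslant n-3$, then $K_{j+1}(M)=[K_j(M),M]\supseteq[K_{j+1}(G),x]=K_{j+2}(G)$. Hence $K_{n-2}(M)\supseteq K_{n-1}(G)=\Z(G)\ne1$, so ${\rm cl}(M)\geqslant n-2$, and therefore ${\rm cl}(M)=n-2$, proving that $M$ is of maximal class.
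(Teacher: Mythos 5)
The paper does not prove this lemma; it is imported verbatim from \cite[Theorem 9.6(e)]{BJI}, so there is no in-paper argument to compare yours against. Judged on its own terms, your proof is correct and follows the standard route. The reduction $M=\lg x\rg K_2(G)$ with $x\in M\setminus G_1$ is right (every maximal subgroup contains $\Phi(G)=K_2(G)$ and $G/K_2(G)\cong {\rm C}_p^2$); the downward induction upgrading $[K_i(G),x]K_{i+2}(G)=K_{i+1}(G)$ to $[K_i(G),x]=K_{i+1}(G)$ is sound, since $K_{i+1}(G)/K_{i+2}(G)$ has order $p$ and $[K_{i+1}(G),x]\leqslant [K_i(G),x]$; and the upward induction $K_j(M)\geqslant K_{j+1}(G)$, giving $K_{n-2}(M)\geqslant K_{n-1}(G)\ne 1$ and hence ${\rm cl}(M)=n-2$, closes the argument. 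The only substantive input you do not establish is that the two-step centralizers $C_G(K_i(G)/K_{i+2}(G))$ coincide with $G_1$ for all $2\leqslant i\leqslant n-2$; as you say, this is precisely Blackburn's theorem that $n\geqslant p+2$ (with $p>2$) forces positive degree of commutativity, whence $[G_1,K_i(G)]\leqslant K_{i+2}(G)$ and the centralizer, being a proper subgroup containing the maximal subgroup $G_1$, equals $G_1$. Since the lemma itself is a citation in the paper, quoting that one fact is a legitimate stopping point, and you correctly locate it as the place where both hypotheses $p>2$ and $n\geqslant p+2$ enter.
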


\begin{lem}\label{eryuanyoujiaohuan}{\rm (\cite[Lemma 3.1]{two-generator})}
Let $G$ be a non-abelian  $p$-group with $d(G)=2$ and ${\rm cl}(G)=c$. If $G$ has an abelian subgroup of index $p$, then $Z(G)\cap G'=K_c(G)\cong {\rm C}_p$.
\end{lem}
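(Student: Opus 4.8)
The plan is to control everything through the endomorphism of $A$ given by commutation with a fixed element of $G\setminus A$. Since $A$ is an abelian subgroup of index $p$ in the non-abelian $p$-group $G$, it is normal, $C_G(A)=A$ and $Z(G)\leqslant A$; fix $g\in G\setminus A$, so that $g^p\in A$. Because $A$ is abelian, the map $\phi\colon A\to A$, $x\mapsto[x,g]$, is a group endomorphism, with $\ker\phi=C_A(g)=A\cap C_G(g)=Z(G)$ and $\mathrm{im}\,\phi=[A,g]=[A,G]=G'$, the last equality holding because $G/A$ is cyclic. For $i\geqslant2$ we have $K_i(G)\leqslant G'\leqslant A$, so $K_{i+1}(G)=[K_i(G),G]=[K_i(G),g]=\phi(K_i(G))$, and inductively $K_{i+1}(G)=\phi^{i}(A)$ for all $i\geqslant1$. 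In particular $\phi^{c}(A)=K_{c+1}(G)=1$, so $K_c(G)=\phi^{c-1}(A)$ is killed by $\phi$, i.e. $K_c(G)\leqslant\ker\phi=Z(G)$; together with $K_c(G)\leqslant K_2(G)=G'$ this already gives the inclusion $K_c(G)\leqslant Z(G)\cap G'$.

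Next I would compare orders. Restricting $\phi$ to $G'=\mathrm{im}\,\phi$ (note $\phi(G')=\phi^2(A)=K_3(G)\leqslant G'$), the first isomorphism theorem gives $|Z(G)\cap G'|=|\ker\phi\cap G'|=|G'|/|\phi(G')|=|G'/K_3(G)|$. Now $G/K_3(G)$ is two-generated of class at most $2$, so its derived subgroup $G'/K_3(G)$ is cyclic, generated by the image of the commutator of the two generators; it is nontrivial, for $K_3(G)=G'$ would force $G'=K_3(G)=\cdots=K_{c+1}(G)=1$, contradicting that $G$ is non-abelian. Hence the whole statement reduces to proving $G'/K_3(G)\cong{\rm C}_p$, equivalently $\mho_1(G')\leqslant K_3(G)$.

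This last inclusion is the heart of the argument, and it is where the index-$p$ hypothesis is used a second time (the first use being that $G/A$ is cyclic). Let $\psi\colon A\to A$ be conjugation by $g$. Its order in $\Aut(A)$ is the least $k$ with $g^k\in C_G(A)=A$, which is exactly $p$ because $g\notin A$ and $g^p\in A$; thus $\psi^p=\mathrm{id}_A$. Inside the commutative subring $\mathbb{Z}[\phi]$ of $\End(A)$ we have $\psi=\mathrm{id}_A+\phi$, so expanding $(\mathrm{id}_A+\phi)^p=\mathrm{id}_A$ gives $\sum_{k=1}^{p}\binom{p}{k}\phi^k=0$, whence $p\,\phi=-\phi^{2}\big(\binom{p}{2}+\binom{p}{3}\phi+\cdots+\phi^{p-2}\big)$. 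Reading this back multiplicatively: for every $y\in G'=\{\phi(x):x\in A\}$ one gets $y^{p}\in\phi^{2}(A)=K_3(G)$, so $\mho_1(G')\leqslant K_3(G)$. Therefore $G'/K_3(G)$ is cyclic, nontrivial and of exponent dividing $p$, hence $G'/K_3(G)\cong{\rm C}_p$; consequently $|Z(G)\cap G'|=p$, and since $K_c(G)\leqslant Z(G)\cap G'$ with $K_c(G)\ne1$ this forces $K_c(G)=Z(G)\cap G'\cong{\rm C}_p$. I expect the only genuinely nonroutine point to be this one — recognizing that $\psi$ has order exactly $p$ and exploiting $(\mathrm{id}_A+\phi)^p=\mathrm{id}_A$ — while the rest is standard commutator bookkeeping (justifying $\mathrm{im}\,\phi=G'$ and $K_{i+1}(G)=\phi(K_i(G))$) plus the care needed to keep the two roles of the hypothesis "$|G:A|=p$" apart.
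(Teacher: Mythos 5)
The paper does not prove this lemma at all -- it is quoted verbatim from Xu--An--Zhang \cite[Lemma 3.1]{two-generator} -- so there is no internal proof to compare against; I have therefore checked your argument on its own terms, and it is correct and complete. The skeleton is sound: $C_G(A)=A$ forces $\ker\phi=Z(G)$ and $\mathrm{im}\,\phi=G'$, the identification $K_{i+1}(G)=\phi^{i}(A)$ gives both $K_c(G)\leq Z(G)\cap G'$ and, via the restriction of $\phi$ to $G'$, the order count $|Z(G)\cap G'|=|G'/K_3(G)|$; the hypothesis $d(G)=2$ enters exactly once, to make $G'/K_3(G)$ cyclic. The one genuinely clever step, expanding $(\mathrm{id}_A+\phi)^p=\mathrm{id}_A$ in the commutative ring $\mathbb{Z}[\phi]\subseteq\End(A)$ to get $p\,\phi\in\phi^2\End(A)$ and hence $\mho_1(G')\leq K_3(G)$, is valid: $\psi=\mathrm{id}_A+\phi$ really is conjugation by $g$, and $\psi^p=\mathrm{id}_A$ because $g^p\in A=C_G(A)$. (This is essentially the same mechanism as the metabelian commutator formula of Lemma \ref{metabelian formula}, specialized to the case where $A$ is abelian, so your route is in the spirit of the tools the paper itself uses elsewhere, e.g.\ in Lemma \ref{daoqunshengchengyuan1}.) The only cosmetic remark is that you do not need the order of $\psi$ to be exactly $p$ -- only $\psi^p=\mathrm{id}_A$ is used -- and the degenerate case $c=2$ is silently covered since then $K_3(G)=1$ and $G'=Z(G)\cap G'\cong{\rm C}_p$.
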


\begin{lem}\label{lem=G' is a set of commutators}{\rm (\cite[Theorem A]{FAH})}
Let $G$ be a finite $p$-group. If $G'$ can be generated by two elements, then $G'=\{[x,g]~|~g\in G\}$ for a suitable $x\in G$.
\end{lem}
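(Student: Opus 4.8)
The crucial first move is to restate the conclusion in terms of conjugacy classes. For any $x\in G$ the fibres of the map $g\mapsto [x,g]$ are precisely the right cosets of $C_G(x)$, so $|[x,G]|=|G:C_G(x)|$, where $[x,G]:=\{[x,g]\ |\ g\in G\}=x^{-1}x^{G}$. Since $[x,G]\subseteq G'$ always holds, we have $|[x,G]|\le |G'|$, with equality \emph{if and only if} $[x,G]=G'$. Hence $G'=\{[x,g]\ |\ g\in G\}$ for some $x$ is equivalent to the existence of an $x$ with $|G:C_G(x)|=|G'|$, i.e. whose conjugacy class $x^{G}$ is a full coset $xG'$. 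The plan is to produce such an element by induction on $|G|$.

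If $G$ is abelian there is nothing to prove. Assume $G$ is non-abelian and pick a central subgroup $Z\le Z(G)\cap G'$ with $|Z|=p$ (possible since $1\ne K_{{\rm cl}(G)}(G)\le Z(G)\cap G'$). Then $\overline{G}=G/Z$ satisfies $\overline{G}'=G'/Z$ and $d(\overline{G}')\le d(G')\le 2$, so by induction there is $x\in G$ with $[x,G]Z=G'$, i.e. $[x,G]$ meets every one of the $|G'|/p$ cosets of $Z$ inside $G'$. Now $|[x,G]|=|G:C_G(x)|$ is a power of $p$ lying between $|G'|/p$ and $|G'|$, hence equals one of these two values. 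If $|[x,G]|=|G'|$ we are done. Otherwise $[x,G]$ is a transversal to $Z$ in $G'$, meeting $Z$ only in $1$; so the remaining task is to modify $x$ to an element $x'$ with $Z\subseteq[x',G]$ (equivalently $[x',g_0]$ a generator of $Z$ for some $g_0$), which by the size count forces $[x',G]=G'$.

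To carry out the modification one should first settle the case ${\rm cl}(G)\le 2$ and, more generally, reduce modulo $K_3(G)$: there $g\mapsto[x,g]K_3(G)$ is a homomorphism from $G$ onto $[x,G]K_3(G)/K_3(G)$, its target $G'/K_3(G)$ is abelian (as $G''\le K_3(G)$) and $2$-generated, and the commutator map induces a bilinear form on $G/G'$ taking values in a group of rank at most $2$; a diagonalization argument — choosing $x$ as a suitable product of elements whose images cover a generating pair of $G'/K_3(G)$, and analysing this form — yields an $x$ with $[x,G]K_3(G)=G'$. One then has to upgrade this to the honest equality $[x,G]=G'$, for which the basic tool is the expansion $[xs,g]=[x,g]^{s}[s,g]=[x,g]\,[[x,g],s]\,[s,g]$, together with the strong restriction that $d(G')\le 2$ imposes on the subgroups of $G'$ that can arise as images of such commutator maps.

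The main obstacle is exactly this last perturbation step: given $x$ with $[x,G]$ a transversal to $Z$ in $G'$, one must show that $x$ can always be adjusted so as to absorb $Z$ into $[x',G]$ without losing any other coset. This is the only place where $d(G')\le 2$ is genuinely used (for $d(G')\ge 3$ the conclusion can fail), and it is likely to require a careful case analysis on ${\rm cl}(G)$ and on the isomorphism type of the $2$-generated group $G'$, or an appeal to the structure theory of finite $p$-groups with a $2$-generated derived subgroup. The case $d(G')=1$ — the classical fact that a $p$-group with cyclic derived subgroup satisfies $G'=[x,G]$ for a single $x$ — is a useful warm-up and a consistency check for the induction.
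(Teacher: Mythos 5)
Your opening reductions are correct: the fibres of $g\mapsto[x,g]$ are the right cosets of $C_G(x)$, so $|[x,G]|=|G:C_G(x)|$ and the conclusion is equivalent to finding $x$ whose conjugacy class is the full coset $xG'$; the induction modulo a central subgroup $Z\leq Z(G)\cap G'$ of order $p$ then correctly yields an $x$ with $[x,G]Z=G'$ and hence $|[x,G]|\in\{|G'|/p,\ |G'|\}$. But the proof stops exactly where the theorem begins. The entire content of the statement is the step you defer: showing that when $[x,G]$ is a proper transversal to $Z$ in $G'$, the element $x$ can be replaced by some $x'$ with $[x',G]=G'$. For this you offer only a programme (``a diagonalization argument'', ``a careful case analysis'', ``an appeal to the structure theory''), not an argument, and you say so yourself. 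Nothing in what you have written rules out the a priori possibility that every element of $G$ realising the maximal value of $|[x,G]|$ still only attains $|G'|/p$; since the conclusion genuinely fails for $d(G')\geq 3$, any correct proof must use the hypothesis $d(G')\leq 2$ in a concrete, quantitative way at precisely the point you leave open, and the identity $[xs,g]=[x,g]^{s}[s,g]$ by itself does not do this.

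For calibration: the paper does not prove this lemma either. It is quoted verbatim as Theorem A of Fern\'andez-Alcober and De Las Heras (Israel J. Math.\ 232 (2019)), i.e.\ it is the main theorem of a separate research article, and even the special case $d(G')=1$ (cyclic derived subgroup) is a nontrivial classical result. So the missing perturbation step is not a routine verification that can be waved through; as it stands your submission is a correct reduction plus an accurate description of the difficulty, not a proof.
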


\begin{lem}{\rm (\cite{metabelian})}\label{metabelian formula}
Let $G$ be a metabelian group. Then for each $x,y\in G$, we have
\begin{center}
$[x^{n},y]=\prod\limits_{i=1}^{n}[x,y,(i-1)x]^{n\choose i}$, where $[x,y,(i-1)x]=[x,y,\underbrace{x,x,\ldots,x}\limits_{i-1}]$.
\end{center}
\end{lem}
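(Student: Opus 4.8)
\noindent\emph{Proof sketch.} The plan is to prove the identity by induction on $n$, after first isolating the behaviour of conjugation by $x$ on the abelian group $G'$. Throughout, write $c_1=[x,y]$ and $c_{i+1}=[c_i,x]$ for $i\geq 1$, so that $c_i=[x,y,(i-1)x]$; since $G$ is metabelian, every $c_i$ lies in the abelian normal subgroup $G'$, and hence all the $c_i$ commute with one another. This commutativity is exactly what makes it legitimate to rearrange the products below and to collect exponents, and it is the only place the hypothesis $G''=1$ is used.

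First I would record the auxiliary fact that for any $c\in G'$ and any $m\geq 0$,
$$c^{x^m}=\prod_{j=0}^{m}[c,{}_{j}x]^{\binom{m}{j}},\qquad\text{equivalently}\qquad [c,x^m]=\prod_{j=1}^{m}[c,{}_{j}x]^{\binom{m}{j}},$$
where $[c,{}_0x]=c$ and $[c,{}_jx]=[[c,{}_{j-1}x],x]$. This is proved by a short induction on $m$: the case $m=0$ is trivial, and the step applies $u^x=u[u,x]$ to each factor $u=[c,{}_jx]\in G'$, then regroups (permissible since $G'$ is abelian) and uses Pascal's rule $\binom{m}{k}+\binom{m}{k-1}=\binom{m+1}{k}$. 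Conceptually, conjugation by $x$ acts on the abelian group $G'$ as the operator $1+D$ with $Dc=[c,x]$, so conjugation by $x^m$ acts as $(1+D)^m$, which is precisely the binomial expansion displayed.

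With this in hand, the main induction on $n$ would run as follows. The case $n=1$ is the definition $[x,y]=[x,y,(0)x]$. For the step, expand $[x^{n+1},y]=[x\cdot x^n,y]=[x,y]^{x^n}\,[x^n,y]$ via $[ab,c]=[a,c]^b[b,c]$. The first factor is $[x,y]^{x^n}=c_1\,[c_1,x^n]=c_1\prod_{j=1}^{n}c_{j+1}^{\binom{n}{j}}=\prod_{i=1}^{n+1}c_i^{\binom{n}{i-1}}$ by the auxiliary fact (reindexing $i=j+1$ and using $\binom{n}{0}=1$). The second factor is $[x^n,y]=\prod_{i=1}^{n}c_i^{\binom{n}{i}}$ by the induction hypothesis. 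Since all $c_i\in G'$ and $G'$ is abelian, the two products merge into $[x^{n+1},y]=\prod_{i=1}^{n+1}c_i^{\binom{n}{i-1}+\binom{n}{i}}=\prod_{i=1}^{n+1}c_i^{\binom{n+1}{i}}$ by Pascal's rule, which is the claimed formula for $n+1$.

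The genuinely delicate part is bookkeeping rather than ideas: applying $[ab,c]=[a,c]^b[b,c]$ with the factors in the correct order, verifying that every commutator that appears lies in $G'$ so that reordering and exponent collection are valid, and lining up the two index shifts so Pascal's identity applies cleanly. I expect the auxiliary identity for $[c,x^m]$ to be the crux; once that is available, the outer induction is a one-line computation.
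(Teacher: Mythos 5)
Your argument is correct: the auxiliary identity $c^{x^m}=\prod_{j}[c,{}_{j}x]^{\binom{m}{j}}$ for $c\in G'$ is valid because $c\mapsto[c,x]$ is an endomorphism of the abelian group $G'$ commuting with conjugation by $x$, and the outer induction via $[x\cdot x^{n},y]=[x,y]^{x^{n}}[x^{n},y]$ together with Pascal's rule goes through exactly as you describe. Note, however, that the paper gives no proof of this lemma at all --- it is quoted from Brisley and Macdonald's article cited as \cite{metabelian} --- so there is no in-paper argument to compare against; your proof is the standard one found in that source and in textbook treatments of metabelian commutator calculus.
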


As a direct consequence of \cite[Theorem 3.1]{metabelian}, we get the following lemma \ref{yajiaohuan}.

\begin{lem}\label{yajiaohuan}
Let $G$ be a two-generator finite metabelian $p$-group. Then $G$ is $p$-abelian if and only if $\exp(G')\leqslant p$ and ${\rm cl}(G)<p$.
\end{lem}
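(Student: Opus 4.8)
The plan is to apply the commutator formula of Lemma \ref{metabelian formula} to a two-generator metabelian $p$-group $G=\lg a,b\rg$, and to show that ``$p$-abelian'' (meaning the map $x\mapsto x^p$ is an endomorphism, equivalently $(xy)^p=x^py^p$ for all $x,y$, equivalently $\mho_1(G)$ is the set of $p$-th powers and $G$ is ``regular enough'') is controlled by the two invariants $\exp(G')$ and ${\rm cl}(G)$. The cited source is \cite[Theorem 3.1]{metabelian}, so the real content here is to extract from that theorem the precise statement for two-generator metabelian $p$-groups and to check that the stated numerical conditions $\exp(G')\leqslant p$ and ${\rm cl}(G)<p$ are exactly what that theorem yields.

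First I would recall the standard identity in a metabelian group: for $x,y\in G$,
\begin{center}
$(xy)^p = x^py^p[y,x]^{\binom{p}{2}}\prod_{i\geqslant 2}c_i^{\binom{p}{i+1}}$,
\end{center}
where the $c_i$ lie in $K_{i+1}(G)$ (this follows from the Hall--Petrescu collection together with metabelianness, which kills all the ``cross'' terms involving two distinct basic commutators of weight $\geqslant 2$); together with Lemma \ref{metabelian formula} applied to rewrite $[x^p,y]$. For the ``if'' direction, suppose $\exp(G')\leqslant p$ and ${\rm cl}(G)=c<p$. Then $K_{c+1}(G)=1$, so the product above terminates, and every exponent $\binom{p}{i+1}$ for $1\leqslant i\leqslant c-1$ is divisible by $p$ (since $p$ is prime and $2\leqslant i+1\leqslant c<p$), while $\binom{p}{2}$ is also divisible by $p$ when $p$ is odd; combined with $\exp(G')\leqslant p$ this forces every correction term to be trivial, hence $(xy)^p=x^py^p$ and $G$ is $p$-abelian. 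The case $p=2$ needs the separate check that $\binom{2}{2}=1$ but then ${\rm cl}(G)<2$ means $G$ is abelian, so the statement is vacuous/trivial there.

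For the ``only if'' direction, I would argue contrapositively. If $\exp(G')>p$, pick $w\in G'$ with $w^p\neq 1$; since $G'$ is generated (as a normal subgroup, hence as a group in the metabelian two-generator case) by $[a,b]$ and its iterated commutators, one locates $x,y$ with $(xy)^p\neq x^py^p$ by isolating the term $[y,x]^{\binom{p}{2}}$ or an appropriate $c_i$ of order $>p$ — here Lemma \ref{lem=G' is a set of commutators} is handy, giving $G'=\{[x,g]\mid g\in G\}$ for a suitable $x$, so an element of order $>p$ in $G'$ is realized as a single commutator. If instead $\exp(G')\leqslant p$ but ${\rm cl}(G)\geqslant p$, then in the collection formula the exponent $\binom{p}{i+1}$ with $i+1=p$, i.e. $\binom{p}{p}=1$, multiplies a nontrivial element of $K_p(G)\neq 1$, and one checks this term cannot be cancelled, so again $p$-abelianness fails for suitable $x,y$.

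The main obstacle is the ``only if'' direction: showing that the potentially-surviving correction term is genuinely nonzero for some explicit choice of $x,y$, rather than being cancelled by other terms in the collection. This is exactly the delicate point settled in \cite[Theorem 3.1]{metabelian}, and the cleanest route is to quote that theorem in the form it is stated there and observe that, specialized to $d(G)=2$, its hypotheses translate verbatim into $\exp(G')\leqslant p$ and ${\rm cl}(G)<p$; I do not expect to reprove the collection-theoretic heart of it here.
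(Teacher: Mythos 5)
The paper gives no proof of this lemma at all: it is presented verbatim as ``a direct consequence of \cite[Theorem 3.1]{metabelian}'', and your proposal ultimately rests on the same citation for the delicate ``only if'' direction, so the two treatments essentially coincide. Your additional self-contained Hall--Petrescu sketch of the ``if'' direction is sound (and in fact needs only ${\rm cl}(G)<p$, which already gives regularity by Lemma~\ref{regular1}, together with $\exp(G')\leqslant p$; metabelianness and $d(G)=2$ play no role in that half).
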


\begin{lem}\label{lem=blackburn}{\rm (\cite[Theorem 4]{Bla})}
Let $G$ be a finite $p$-group. If both $G$ and $G'$ can be generated by two elements, then $G'$ is abelian.
\end{lem}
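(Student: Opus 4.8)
This is Blackburn's theorem (\cite[Theorem 4]{Bla}), so one may simply cite the reference; for completeness I sketch a possible self-contained argument, by contradiction. Let $G$ be a counterexample of smallest order: $d(G)\le 2$, $d(G')\le 2$, but $G'$ is non-abelian, so $G''\ne 1$. For every nontrivial $N\trianglelefteq G$ the quotient $G/N$ still satisfies the hypotheses (neither $d(G/N)$ nor $d((G/N)')$ exceeds the corresponding invariant of $G$), so by minimality $(G/N)'=G'N/N$ is abelian, i.e. $G''\le N$; thus $G''$ lies in every nontrivial normal subgroup of $G$. Since $G$ is a $p$-group, $G''\cap\Z(G)\ne 1$, and being normal it contains $G''$, so $G''\le\Z(G)$; then every subgroup of $G''$ is normal, forcing $|G''|=p$. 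Write $G''=\lg z\rg$ with $z\in\Z(G)$ of order $p$. Now $G/\lg z\rg$ is metabelian, so $G'$ is nilpotent of class exactly $2$; since $d(G')=2$ we have $G'=\lg r,s\rg$, and bilinearity of the commutator on the class-$2$ group $G'$ gives $G''=[G',G']=\lg[r,s]\rg$, so after adjusting the choice of generator of $G''$ we may assume $[r,s]=z$.

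Next I would bring in $d(G)=2$. Write $G=\lg a,b\rg$ and $c=[a,b]$; in any $2$-generated group the factor $K_2(G)/K_3(G)$ is cyclic, generated by $cK_3(G)$, so $G'=\lg c\rg K_3(G)$. Consider $[K_3(G),G']$: since $K_3(G)\le G'$ and $G'$ has class $2$, this subgroup is contained in $[G',G']=\lg z\rg$, so it is $1$ or $\lg z\rg$. If $[K_3(G),G']=1$ then $K_3(G)\le\Z(G')$, so $G'/\Z(G')$ is a quotient of the cyclic group $G'/K_3(G)$ and hence cyclic, forcing $G'$ abelian --- a contradiction. So $[K_3(G),G']=\lg z\rg$; writing elements of $G'=\lg c\rg K_3(G)$ as $c^i n$ with $n\in K_3(G)$ and using that $[c,K_3(G)]\le\lg z\rg$ is central, one computes $[c^i n,m]=[c,m]^i[n,m]$ for $m\in K_3(G)$, whence $\lg z\rg=[K_3(G),G']=[c,K_3(G)]\cdot K_3(G)'$.

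The remaining step --- ruling out this configuration --- is where the real difficulty lies. The plan is a descent along the lower central series, carried out in the metabelian quotient $G/\lg z\rg$ with the help of the collection formula of Lemma \ref{metabelian formula}: one uses $d(G)=2$ to keep each factor $K_i(G)/K_{i+1}(G)$ under control (every such factor being generated by iterated commutators of $c$ with $a$ and $b$), transfers the relation $\lg z\rg=[c,K_3(G)]K_3(G)'$ one step further down the series at a time, and reaches a contradiction at the last nontrivial term. The genuine obstacle is the ensuing commutator bookkeeping --- notably the behaviour of the binomial coefficients $\binom{p^{k}}{i}$ modulo $p$ thrown up by the collection formula, together with the extra care needed when $p=2$ --- which is essentially Blackburn's original argument, so in practice one would appeal to \cite{Bla}.
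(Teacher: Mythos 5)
The paper offers no proof of this lemma---it is imported verbatim from Blackburn \cite[Theorem 4]{Bla}---so your appeal to that citation is exactly the paper's own justification, and the approaches coincide. Your supplementary sketch's opening reductions (that $G''$ is the unique minimal normal subgroup, $|G''|=p$, $G''\le Z(G)$, and $G'/K_3(G)$ is cyclic) are all sound, but as you yourself acknowledge it stops short of the substantive commutator-collection core of Blackburn's argument, so the citation is what actually carries the proof.
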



\begin{lem}\label{lem=Blackburn}{\rm (\cite[Theorem 1.5(ii)]{maximal-class})}
Let $G$ be a finite $p$-group. Then
$$\exp(K_i(G)/K_{i+1}(G))\leqslant \exp(K_{i-1}(G)/K_i(G))~~\mbox{for all}~~i\geqslant 2.$$
\end{lem}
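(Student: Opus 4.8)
The plan is to exploit the fact that commutation behaves additively on consecutive lower central factors. Write $L_j=K_j(G)/K_{j+1}(G)$ for $j\geqslant 1$. Each $L_j$ is abelian, since $[K_j(G),K_j(G)]\leqslant K_{2j}(G)\leqslant K_{j+1}(G)$, and for the index $i\geqslant 2$ in question the factor $L_i=K_i(G)/K_{i+1}(G)$ is generated by the cosets of the commutators $[x,g]$ with $x\in K_{i-1}(G)$ and $g\in G$, because $K_i(G)=[K_{i-1}(G),G]$. So it suffices to bound the orders of these generating cosets.

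The core step is to fix $g\in G$ and consider the map $\varphi_g\colon K_{i-1}(G)\to L_i$ defined by $\varphi_g(x)=[x,g]K_{i+1}(G)$. Using the collection identity $[xy,g]=[x,g][x,g,y][y,g]$ together with the inclusion $[x,g,y]\in[K_i(G),G]=K_{i+1}(G)$ (valid since $[x,g]\in K_i(G)$), one sees that $\varphi_g(xy)=\varphi_g(x)\varphi_g(y)$, so $\varphi_g$ is a homomorphism; and since $x\in K_i(G)$ forces $[x,g]\in K_{i+1}(G)$, it factors through a homomorphism $\overline{\varphi}_g\colon K_{i-1}(G)/K_i(G)\to L_i$. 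Now set $e=\exp(K_{i-1}(G)/K_i(G))$. For every $x\in K_{i-1}(G)$ we have $x^e\in K_i(G)$, hence $[x,g]^e\equiv[x^e,g]\pmod{K_{i+1}(G)}$ and $[x^e,g]\in[K_i(G),G]=K_{i+1}(G)$; thus each coset $[x,g]K_{i+1}(G)$ has order dividing $e$. Letting $g$ range over $G$, the images of the various $\overline{\varphi}_g$ together generate the abelian group $L_i$ by elements of order dividing $e$, so $\exp(L_i)$ divides $e$, which gives $\exp(K_i(G)/K_{i+1}(G))\leqslant\exp(K_{i-1}(G)/K_i(G))$ as claimed.

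The argument is short and I do not anticipate a genuine obstacle; the only points needing care are the verification that $\varphi_g$ is well defined and multiplicative — which rests on the commutator collection identity and on the standard fact $[K_j(G),G]=K_{j+1}(G)$ — and the observation that the commutator cosets actually generate $L_i$ rather than merely lying in it. For a fully self-contained write-up one would simply recall these commutator identities explicitly before running the computation above.
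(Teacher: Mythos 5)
Your argument is correct. Note that the paper itself gives no proof of this lemma --- it is quoted verbatim from Blackburn's 1958 paper \emph{On a special class of $p$-groups} --- so there is nothing internal to compare against; your write-up is the standard self-contained derivation. The three load-bearing points all check out: $L_i=K_i(G)/K_{i+1}(G)$ is abelian because $[K_i(G),K_i(G)]\leq K_{2i}(G)\leq K_{i+1}(G)$; the map $\varphi_g$ is a homomorphism because $[xy,g]=[x,g]\,[x,g,y]\,[y,g]$ with $[x,g,y]\in[K_i(G),G]=K_{i+1}(G)$; and the generating cosets $[x,g]K_{i+1}(G)$ have order dividing $e=\exp(K_{i-1}(G)/K_i(G))$ since $x^e\in K_i(G)$ forces $\varphi_g(x)^e=\varphi_g(x^e)=1$. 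Since an abelian group generated by elements of order dividing $e$ has exponent dividing $e$, the inequality follows.
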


\begin{lem}\label{regular1}{\rm (\cite[Theorem 7.1(b)]{BJI})}
Let $G$ be a finite $p$-group. If ${\rm cl}(G)<p$, then $G$ is regular.
\end{lem}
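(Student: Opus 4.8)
Proof proposal for Lemma \ref{regular1} (that a finite $p$-group $G$ with ${\rm cl}(G)<p$ is regular).

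The plan is to verify the defining property of regularity directly: for all $a,b\in G$ there exists $c\in\langle a,b\rangle'$ with $(ab)^p=a^pb^pc^p$. Since regularity is inherited by all sections and the condition ${\rm cl}(G)<p$ passes to subgroups, it suffices to work inside the two-generator subgroup $H=\langle a,b\rangle$; so I would assume from the outset that $G=\langle a,b\rangle$ with ${\rm cl}(G)=c<p$. First I would expand $(ab)^p$ by collecting: using the Hall--Petrescu collection formula, $(ab)^p = a^p b^p \prod_{i=2}^{c} w_i^{\binom{p}{i}}\cdot(\text{correction terms of weight }>c)$, where each $w_i$ lies in $K_i(G)$. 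Because ${\rm cl}(G)=c<p$, any term of weight exceeding $c$ is trivial, so the product is finite and every factor beyond the first two sits in $G'=K_2(G)$.

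The key arithmetic observation is that $p\mid\binom{p}{i}$ for $1\le i\le p-1$, hence for every $i$ in the range $2\le i\le c<p$ the exponent $\binom{p}{i}$ is divisible by $p$. Thus each factor $w_i^{\binom{p}{i}}$ is a $p$-th power in $G'$. This already shows $(ab)^p\in a^pb^pG'^{\,p}$ in the abelian-by-something sense, but to land exactly on a single $p$-th power $c^p$ with $c\in G'$ I would argue by induction on ${\rm cl}(G)$. Passing to $\bar G=G/K_c(G)$, which has class $c-1<p-1<p$ and is two-generated, induction gives regularity there, so $(ab)^p\equiv a^pb^p\bar c^{\,p}\pmod{K_c(G)}$ for some $c\in G'$; lifting $c$ and dividing, the discrepancy $d:=(a^pb^pc^p)^{-1}(ab)^p$ lies in $K_c(G)$, which is central of exponent dividing $p$ (here one uses Lemma \ref{lem=Blackburn}: $\exp(K_c(G))\le\exp(K_{c-1}(G)/K_c(G))\le\cdots$, together with the fact that the top term $K_c(G)$ of a group generated in class $<p$ has exponent $p$ — or more simply, $K_c(G)=K_c(G)/K_{c+1}(G)$ is generated by basic commutators whose $p$-th powers vanish by the weight count). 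Then $a^pb^pc^p d = (ab)^p$ with $d\in K_c(G)\le G'$, and since $K_c(G)$ is central of exponent $p$ one absorbs $d$ into $c^p$ by replacing $c$ with a suitable element (as $d=e^p$ is a $p$-th power in the elementary abelian group $\langle c^p, K_c(G)\rangle$, or directly $c^pd=(c e')^p$ mod higher terms because $[c,e']$ already vanishes in the relevant section).

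The main obstacle is the bookkeeping in the collection formula: one must be sure that when ${\rm cl}(G)<p$ \emph{all} the correction commutators $w_i$ for $i\ge p$ genuinely vanish (they do, having weight $\ge p > c$) and that the surviving ones for $2\le i\le c$ carry exponents $\binom{p}{i}$ that are multiples of $p$ — which is exactly the numeric fact $p\mid\binom{p}{i}$ for $0<i<p$, valid here since $i\le c<p$. A secondary subtlety is controlling $\exp(K_c(G))$: one needs $K_c(G)^{\,p}=1$, which follows because $K_c(G)$ is spanned by left-normed commutators of weight $c$ in $a,b$, and raising such a commutator to the $p$-th power produces, via the same collection identity, only commutators of weight $\ge c+1$ (hence trivial) — so $K_c(G)$ is elementary abelian. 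Once these two points are nailed down, the induction on class closes cleanly and delivers the required $c\in G'$ with $(ab)^p=a^pb^pc^p$, i.e.\ $G$ is regular.
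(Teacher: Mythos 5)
The paper offers no proof of Lemma \ref{regular1}; it is quoted from Berkovich (\cite[Theorem 7.1(b)]{BJI}), so your argument can only be measured against the standard proof there. The first half of your proposal \emph{is} that standard proof, and it is already complete once you use the usual (Hall) definition of regularity: one only needs $(ab)^p=a^pb^p\prod_j c_j^p$ with each $c_j\in\langle a,b\rangle'$, not a single $p$-th power. Indeed, the Hall--Petrescu identity gives $a^pb^p=(ab)^p\,c_2^{\binom{p}{2}}\cdots c_p^{\binom{p}{p}}$ with $c_i\in K_i(\langle a,b\rangle)$; the term $c_p$ dies because ${\rm cl}(G)<p$, and for $2\le i\le p-1$ one has $p\mid\binom{p}{i}$, so $c_i^{\binom{p}{i}}=\bigl(c_i^{\binom{p}{i}/p}\bigr)^p$ is literally a $p$-th power of an element of $K_i(G)\le\langle a,b\rangle'$. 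Rearranging gives the definition verbatim. Everything after your sentence ``to land exactly on a single $p$-th power'' is unnecessary.

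It is also where the genuine error lies. Your induction hinges on the claim that $K_c(G)$ is central of exponent $p$ (``elementary abelian''), justified by asserting that the $p$-th power of a weight-$c$ commutator is a product of commutators of weight $\ge c+1$. That is false: take $p=3$ and $G=\langle a,b\rangle$ free of class $2$ modulo $a^9=b^9=[a,b]^9=1$; then ${\rm cl}(G)=2<3$ but $K_2(G)=G'\cong {\rm C}_9$. (The identity $[a,b]^p=[a^p,b]\cdot(\text{higher weight})$ converts a $p$-th power into another weight-$2$ commutator, not into higher-weight ones.) Lemma \ref{lem=Blackburn} likewise only bounds $\exp(K_i/K_{i+1})$ by $\exp(K_{i-1}/K_i)$ and gives no absolute bound of $p$ on the top term. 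So the absorption step ``$d=e^p$ in the elementary abelian group $\langle c^p,K_c(G)\rangle$'' has no foundation, and the single-$c^p$ form you are chasing is anyway a \emph{consequence} of regularity (via $\mho_1(H)=\{x^p\mid x\in H\}$ in regular groups), not something to build into the verification. Delete the second half and state the conclusion in the product form, and your proof is correct and is the standard one.
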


\begin{lem}\label{regular2}{\rm (\cite[Theorem 7.2(e)]{BJI})}
Let $G$ be a regular $p$-group. Then
$$[x^{p^{k}},y^{p^{n}}]=1\Longleftrightarrow [x,y]^{p^{k+n}}=1.$$
\end{lem}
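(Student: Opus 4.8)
The plan is to deduce the statement from P.~Hall's structure theory of regular $p$-groups, most of which is recorded in \cite[\S7]{BJI}. First I would replace $G$ by the two-generator subgroup $H=\langle x,y\rangle$: subgroups of regular $p$-groups are again regular, and every element occurring in the asserted equivalence lies in $H$, so it suffices to argue inside $H$. Throughout I will use three standard facts about a regular $p$-group $H$: (i)~for each $k$ the sets $\mho_k(H)=\{h^{p^k}\mid h\in H\}$ and $\Omega_k(H)=\{h\in H\mid h^{p^k}=1\}$ are characteristic subgroups with $\exp(\Omega_k(H))\leq p^k$; (ii)~the cancellation law $a^{p^k}=b^{p^k}\iff(a^{-1}b)^{p^k}=1$ for $a,b\in H$; and (iii)~for subgroups $A,B\leq H$ one has $\mho_k([A,B])=[\mho_k(A),B]$, and hence $[\mho_k(A),\mho_n(B)]=\mho_{k+n}([A,B])$.

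The argument is then short. Since two cyclic groups commute as soon as their generators do, $[x^{p^k},y^{p^n}]=1$ is equivalent to $[\langle x^{p^k}\rangle,\langle y^{p^n}\rangle]=1$, that is, to $[\mho_k(\langle x\rangle),\mho_n(\langle y\rangle)]=1$; by (iii) this last group equals $\mho_{k+n}([\langle x\rangle,\langle y\rangle])$, so by (i) the left-hand side holds precisely when $\exp([\langle x\rangle,\langle y\rangle])\leq p^{k+n}$. It remains to see that this is equivalent to $[x,y]^{p^{k+n}}=1$. One direction is immediate since $[x,y]\in[\langle x\rangle,\langle y\rangle]$. Conversely, a routine commutator expansion (each $[x^{i},y^{j}]$ is a product of $H$-conjugates of $[x,y]^{\pm1}$) shows $[\langle x\rangle,\langle y\rangle]$ is contained in the normal closure $N=\langle[x,y]\rangle^{H}$; if $[x,y]^{p^{k+n}}=1$ then $[x,y]\in\Omega_{k+n}(H)$, whence $N\leq\Omega_{k+n}(H)$ by normality, and therefore $\exp([\langle x\rangle,\langle y\rangle])\leq\exp(N)\leq p^{k+n}$ by (i).

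The only non-formal ingredient is (iii) — equivalently, the cancellation law (ii) combined with an analysis of the error terms in the regular power identity $(ab)^{p^k}\equiv a^{p^k}b^{p^k}\pmod{\mho_k(\langle a,b\rangle')}$ — and this is where the real work sits; I would simply quote it from \cite[Theorem~7.2]{BJI} and its neighbours. A more self-contained route, avoiding (iii), is an induction on $k+n$ that peels off one factor of $p$ at a time: one application of (ii) (to $a=x$, $b=x^{y^{p^n}}$) turns $[x^{p^k},y^{p^n}]=1$ into $[x,y^{p^n}]^{p^k}=1$, and iterating this, together with the base case $[x^{p^m},y]=1\iff[x,y]^{p^m}=1$ (again an instance of (ii), now with $b=x^{y}$), reduces the claim to controlling how $[x^{p},y]$ differs from $[x,y]^{p}$ modulo higher terms of the lower central series of $H$. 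The obstacle is then the same one, but it can be pushed through inductively using the exponent estimates $\exp(K_i(H)/K_{i+1}(H))\leq\exp(K_{i-1}(H)/K_i(H))$ of Lemma~\ref{lem=Blackburn}.
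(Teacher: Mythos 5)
The paper does not prove Lemma~\ref{regular2} at all: it is quoted verbatim from \cite[Theorem 7.2(e)]{BJI} as part of P.~Hall's classical theory of regular $p$-groups, so there is no in-paper argument to compare yours against. Judged on its own terms, your formal deductions are correct: the reduction to $H=\langle x,y\rangle$, the identification $\langle x^{p^k}\rangle=\mho_k(\langle x\rangle)$, the equivalence of $\mho_{k+n}(K)=1$ with $\exp(K)\leqslant p^{k+n}$ for the regular subgroup $K=[\langle x\rangle,\langle y\rangle]$, and the converse direction via $[\langle x\rangle,\langle y\rangle]\leq\langle[x,y]\rangle^{H}\leq\Omega_{k+n}(H)$ together with $\exp(\Omega_{k+n}(H))\leqslant p^{k+n}$ are all sound (the last chain also quietly uses $\mho_k(\mho_n(K))=\mho_{k+n}(K)$, another regular-group fact, to pass from two applications of (iii) to a single $\mho_{k+n}$).

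The genuine gap is the one you flag yourself: everything of substance is carried by fact (iii), the identity $[\mho_k(A),B]=\mho_k([A,B])$ for arbitrary subgroups of a regular $p$-group. That identity is not a ``neighbouring'' elementary fact --- it is the subgroup-level form of the very statement being proved, a part of the same theorem of P.~Hall, and proving it requires exactly the hard analysis (relating $[a^p,b]$ to $[a,b]^p$ modulo $\mho_1$ of a suitable derived subgroup, then iterating) that the lemma itself encodes. So what you have is a correct cross-derivation of one clause of Hall's theorem from another clause of equal depth, not an independent proof; if (iii) may be quoted, one could just as well quote (e) directly, as the paper does. Your sketched self-contained alternative at the end stops at precisely the same obstacle (controlling $[x^p,y]$ versus $[x,y]^p$), and the exponent estimates of Lemma~\ref{lem=Blackburn} alone will not close it, since they bound exponents of lower-central quotients of $H$ rather than orders of the specific commutators involved. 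To make the argument complete you would need to prove the regular power identity $(ab)^{p^k}\equiv a^{p^k}b^{p^k}\pmod{\mho_k(\langle a,b\rangle')}$ and deduce the cancellation law and (iii) from it, i.e.\ reproduce Hall's development.
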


\begin{lem}\label{lem:=G'=H'G_{3}}{\rm (\cite[Lemma 2.1]{derived})}
Let $G$ be a nilpotent group and $H$ a subgroup of $G$, such that $G'=H'K_{3}(G)$. Then $K_{i}(G)=K_{i}(H)$ for all $i\geqslant 2$.
\end{lem}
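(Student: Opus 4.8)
The plan is to argue by induction on $i$ that $K_i(G)=K_i(H)$, with the hypothesis $G'=H'K_3(G)$ furnishing the base case $i=2$. The key observation to exploit is that $K_3(G)=[G',G]$ sits strictly above $K_3(G)$'s image in the chief-like series, so one can bootstrap: since $G'=H'K_3(G)$, taking commutators with $G$ gives $K_3(G)=[G',G]=[H'K_3(G),G]=[H',G]K_4(G)$. The first subgroup $[H',G]$ contains $[H',H]=K_3(H)$, so $K_3(G)=K_3(H)\,[H',G]\,K_4(G)$; iterating this reduction (feeding $K_4(G)=[K_3(G),G]$ back in and using nilpotency so that the tail $K_{c+1}(G)=1$ eventually vanishes) should collapse the extra factors. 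I expect the clean way to phrase this is: $G'=H'K_3(G)$ implies $G'=H'K_j(G)$ for every $j\geq 3$ by downward iteration using $K_j(G)=[K_{j-1}(G),G]$ and nilpotency, hence (taking $j=c+1$) $G'=H'$; then $K_i(G)=[\,\underbrace{G',G,\dots,G}_{i-2}\,]=[\,\underbrace{H',G,\dots,G}_{i-2}\,]$.

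The remaining point is to replace the outer $G$'s by $H$'s. Here I would use that once $G'=H'$ is established, $G=HG'=HH'=H$ would be too strong and false in general, so instead one argues directly that $[H',G]=[H',H]$: write $G=\langle H, G'\rangle$ (not literally, but $G'\leq$ something)... more carefully, for any $g\in G$ and $h'\in H'$, the commutator $[h',g]$ lies in $K_3(G)=K_3(H)$ by the first part, and modulo $K_4(G)=K_4(H)$ the map is bilinear in the appropriate quotient, so $[H',G]K_4(G)=[H',H]K_4(G)$ provided $G=HK_?(G)$ acts the same way — and indeed $G'=H'$ gives $G=H\cdot(\text{stuff in }H)$ modulo higher terms is not automatic, so the honest route is a second induction: assuming $K_i(G)=K_i(H)$, show $K_{i+1}(G)=[K_i(G),G]=[K_i(H),G]$ and then $[K_i(H),G]=[K_i(H),H]K_{i+2}(G)=K_{i+1}(H)K_{i+2}(G)$, because $K_i(H)\leq G'=H'$ and commuting an element of $G'$ with $g\in G$ versus with the $H$-part of $g$ differs by an element of $[G', G']\cdot(\text{higher})\leq K_{i+2}(G)$ — wait, $[G',G']$ need not lie in $K_{i+2}$. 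The correct justification is: every $g\in G$ can be written (since $G'=H'\leq H$) as $g=h\cdot z$ is false; rather one notes $G/H'=G/G'$ is abelian generated by images of $H$, but $H'\leq H$ so $G=H\cdot(G\cap\ldots)$. I will instead simply cite that $G = H K_3(G) \cdot (\text{central-ish correction})$ fails, and fall back on: since $G' = H'$, for the commutator calculation we may use the three-subgroup lemma, $[K_i(H),G,G]=K_{i+2}(H)$ forces, by an induction downward from the top of the lower central series, $[K_i(H),G]=K_{i+1}(H)$.

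The main obstacle, and the step deserving the most care, is exactly this last replacement of $G$ by $H$ in the commutator $[K_i(H),G]$: it is not true that $G=H$, so one cannot naively restrict. The resolution is to combine the identity $G'=H'$ (already obtained) with nilpotency: $K_{i+1}(G)=[K_i(H),G]$ and, since $K_i(H)\leq H'=G'$ and $[G',G,G]$ relates the two, one shows $[K_i(H),G]\equiv [K_i(H),H]\pmod{K_{i+2}(G)}$ using that any $g\in G$ satisfies $gH'=g'H'$ for some $g'\in H$ is the wrong statement — the right one is that $G$ modulo $H$ need not be trivial, so actually the cleanest complete argument (and the one I would write up) avoids this entirely: having shown $G'=H'$, apply the result to all terms at once by noting $K_i(G)/K_{i+1}(G)$ is generated by $[G',G]$-type elements, and use \textbf{induction from the top}: $K_{c+1}(G)=1=K_{c+1}(H)$ is not known a priori since $\mathrm{cl}(H)$ could differ — but $H'=G'$ and $K_i(G)=[G',{}_{i-2}G]\supseteq[G',{}_{i-2}H]=[H',{}_{i-2}H]=K_i(H)$ gives one inclusion, and for the reverse, $K_i(G)=[H',{}_{i-2}G]$ while $G=H\Omega$ with... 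Ultimately I expect the published proof dispatches this in two or three lines by a slicker use of $G=HG'$ together with $[G',G']\leq$ the relevant term, so in the writeup I would aim to locate precisely why $G=HG'$ (which holds trivially since $G'\leq G$ and... no, $HG'$ could be proper) — the genuinely true enabling fact is $G' = H'K_3(G)\Rightarrow G'=H'$, and then $[a,g]$ for $a\in H', g\in G$ lies in $\langle [a,h]: h\in H\rangle$ modulo $[H', [G,G]] = [H',G']\leq K_{i+2}$, which closes the induction. That final modular identity is where all the work sits.
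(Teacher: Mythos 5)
First, note that the paper offers no proof of this lemma: it is imported wholesale from \cite[Lemma 2.1]{derived}, so your argument has to be judged on its own. Your skeleton is the right one --- establish $K_i(G)=K_i(H)K_{i+1}(G)$ for all $i\geqslant 2$ by induction on $i$, then collapse the tail by nilpotency to get $K_i(G)=K_i(H)K_{i+1}(G)=K_i(H)K_{i+2}(G)=\cdots=K_i(H)$ --- but the proposal never closes the one step on which everything hinges, namely $[K_i(H),G]\leqslant K_{i+1}(H)K_{i+2}(G)$, i.e.\ replacing the outer $G$ by $H$ modulo the next term. You correctly flag this as ``where all the work sits'', yet each attempt at it is either withdrawn in the same sentence or rests on an unjustified assertion. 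The closing claim, that for $a\in H'$ and $g\in G$ the commutator $[a,g]$ lies in $\langle [a,h]\mid h\in H\rangle$ modulo $[H',G']$, has no visible justification (nothing places $g$ in $H\,C_G(a)\,G'$; indeed $G=HG'$ can fail, as you note), and the bound $[H',G']\leqslant K_{i+2}(G)$ attached to it is false for $i\geqslant 3$ --- as you yourself observe earlier in the same paragraph before asserting it anyway. Even your first step is affected: to pass from $G'=H'K_3(G)$ to $G'=H'K_4(G)$ you need $[H',G]\leqslant H'K_4(G)$, which is an instance of the very inclusion you have not proved, and the containment you do record, $[H',G]\supseteq K_3(H)$, points in the useless direction.

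The missing idea is the three subgroup lemma. Write $[K_i(H),G]=[K_{i-1}(H),H,G]$ and set $N=K_{i+1}(H)K_{i+2}(G)$, which is normal in $G$ because $[K_{i+1}(H),G]\leqslant K_{i+2}(G)$. Then $[H,G,K_{i-1}(H)]\leqslant [G',K_{i-1}(H)]=[H'K_3(G),K_{i-1}(H)]\leqslant N$, using the hypothesis $G'=H'K_3(G)$ together with $[H',K_{i-1}(H)]\leqslant K_{i+1}(H)$ and $[K_3(G),K_{i-1}(G)]\leqslant K_{i+2}(G)$; and $[G,K_{i-1}(H),H]\leqslant [K_i(G),H]=[K_i(H)K_{i+1}(G),H]\leqslant N$, using the inductive hypothesis at level $i$. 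The three subgroup lemma then yields $[K_{i-1}(H),H,G]\leqslant N$, which is exactly the inclusion needed to advance the induction (for $i=2$ read $K_1(H)=H$, so only the hypothesis $G'=H'K_3(G)$ is used). Without this, or an equivalent Hall--Witt computation, what you have written is a plan rather than a proof.
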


\section{Some results of finite ${\mathcal DC}$-groups}
By the definition of ${\mathcal DC}$-groups and Correspondence Theorem, we can get the following useful lemma immediately.
\begin{lem}\label{zishangyichuan}
Let $G$ be a finite ${\mathcal DC}$-group. Then the subgroups and quotient groups of $G$ is also a ${\mathcal DC}$-group.
\end{lem}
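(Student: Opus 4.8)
The statement to prove is Lemma~\ref{zishangyichuan}: if $G$ is a finite $\mathcal{DC}$-group, then subgroups and quotients of $G$ are also $\mathcal{DC}$-groups. This is flagged in the text as following immediately from the definition and the Correspondence Theorem, so the proof should be short.

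\medskip

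The plan is to verify the two closure properties separately, in each case reducing the problem to checking that $\mathrm{DS}$ of the relevant group sits inside a chain. First I would handle subgroups: let $K \le G$ and take any two subgroups $H_1, H_2 \le K$. Then $H_1, H_2 \le G$, so by hypothesis $H_1' \le H_2'$ or $H_2' \le H_1'$; since the derived subgroup of $H_i$ computed inside $K$ is the same as computed inside $G$, this shows $\mathrm{DS}(K)$ is a chain, i.e. $K \in \mathcal{DC}$. Second I would handle quotients: let $N \trianglelefteq G$ and consider $\overline{G} = G/N$. By the Correspondence Theorem every subgroup of $\overline{G}$ has the form $\overline{H} = H/N$ for some $H$ with $N \le H \le G$, and $\overline{H}' = (H/N)' = H'N/N$. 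Given two such subgroups $\overline{H_1}, \overline{H_2}$, the hypothesis gives (say) $H_1' \le H_2'$, hence $H_1'N/N \le H_2'N/N$, i.e. $\overline{H_1}' \le \overline{H_2}'$. Thus $\mathrm{DS}(\overline{G})$ is a chain and $\overline{G} \in \mathcal{DC}$.

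\medskip

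There is essentially no obstacle here; the only point requiring a word of care is the identity $(H/N)' = H'N/N$ for $N \le H \le G$, which is standard, and the observation that passing to a subgroup does not change derived subgroups of sub-subgroups. I would also note that finiteness plays no real role in the argument, though it is harmless to keep the hypothesis as stated. The write-up should be just a few lines invoking these two facts.
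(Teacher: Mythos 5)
Your proof is correct and is exactly the argument the paper has in mind: the paper gives no explicit proof, stating only that the lemma follows immediately from the definition and the Correspondence Theorem, which is precisely the two-step verification you carry out. Nothing to add.
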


\begin{thm}\label{Sylow}
Let $G$ be a finite ${\mathcal DC}$-group. Then

{\rm (1)} $G$ is solvable.

{\rm (2)} $G=P\rtimes A$, where $P\in {\rm Syl}_p(G)$ and $A$ is an abelian $p'$-subgroup of $G$.
\end{thm}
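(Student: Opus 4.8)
The plan is to exploit the fact, recorded in Lemma \ref{zishangyichuan}, that the class of $\mathcal{DC}$-groups is closed under subgroups and quotients, so that a minimal counterexample argument is available. For part (1), I would argue that a finite $\mathcal{DC}$-group $G$ cannot involve a nonabelian simple section. Suppose $G$ is a minimal nonsolvable $\mathcal{DC}$-group. By minimality every proper subgroup and every proper quotient is solvable, so $G$ is a minimal nonsolvable group; such groups have a unique minimal normal subgroup $N$ which is nonabelian simple (or a direct product of copies of one simple group $S$), and $G/N$ is solvable. The key obstruction to solvability is that a nonabelian simple group $S$ satisfies $S'=S$, and more importantly one can locate two subgroups $H,K\le G$ whose derived subgroups are incomparable: inside $\soc(G)$ one finds an $A_5$ (or the relevant minimal simple group from Thompson's classification), whose derived subgroup is itself, while a solvable complement-type subgroup $L$ with $L'\ne 1$ but $L'$ not contained in $\soc(G)$ (or not comparable to a perfect subgroup) gives $H'\nleq K'$ and $K'\nleq H'$. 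Concretely, a perfect subgroup $P$ has $P'=P$, so if $G$ has \emph{any} subgroup $Q$ with $1\ne Q'\ne Q$ and $Q'\nleq P$, the chain condition fails; and a nonsolvable group always contains both a perfect nonabelian subgroup and such a $Q$ (e.g. a suitable subgroup of a Borel-like subgroup). This forces $G$ solvable.

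For part (2), given that $G$ is solvable, fix $p$ a prime dividing $|G|$ and let $P\in\Syl_p(G)$; I want to show $P\trianglelefteq G$ and that $G/P$ (equivalently some complement, which exists by Schur--Zassenhaus once $P$ is normal) is abelian. The first step is to show that $G$ has a normal Sylow $p$-subgroup for at most... — rather, to show $O^{p}(G)$ is a $p'$-group, i.e. $P$ is normal. Consider the Fitting subgroup or use the following: if $G$ is not $p$-nilpotent-complemented appropriately, take a Hall $p'$-subgroup $A$ (exists by solvability) and a Sylow $p$-subgroup $P$. The derived subgroup $A'$ lies in $A$ and $P'$ lies in $P$; since $\gcd(|P'|,|A'|)=1$ and they must be comparable in the chain $\DS(G)$, one of them is trivial. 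Thus either $P'=1$ ($P$ abelian) or $A'=1$ ($A$ abelian). Ruling on which: if for \emph{every} complementary pair we had $A'=1$ we are close to done; the content is to show $A$ can always be taken abelian and $P$ normal. I would argue: among all primes $q\mid|G|$, consider a Sylow $q$-subgroup $Q$ for $q\ne p$; by the same coprimality-plus-chain argument applied to $Q$ and a Hall $q'$-subgroup, and running over all $q$, at most one prime $r$ can have nonabelian Sylow subgroup, and every Sylow subgroup for a prime $q\ne r$ is abelian. Combined with solvability and a transfer/Burnside-type argument (or Lemma \ref{zishangyichuan} applied to quotients to push down), the unique prime $r$ with possibly-nonabelian Sylow subgroup is the prime $p$ of the statement, and the Hall $p'$-subgroup is abelian; normality of $P$ then follows because $G/P\,'$-type quotients are abelian and one checks $O^p(G)=P$.

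The main obstacle I anticipate is pinning down part (2) rigorously: the coprimality argument instantly gives ``$P$ abelian or $A$ abelian'' but not directly ``$P\trianglelefteq G$.'' The real work is showing a \emph{normal} Sylow $p$-subgroup exists — this likely needs induction on $|G|$ using Lemma \ref{zishangyichuan}: pass to $G/M$ for $M$ a minimal normal subgroup (elementary abelian $q$-group), get a normal Sylow $p$-image there, pull back, and handle the two cases $q=p$ and $q\ne p$ separately, using that $\DS(G)$ being a chain severely restricts how $M$ can interact with $p'$-elements (if $q\ne p$ and $A$ is the abelian Hall $p'$-subgroup containing $M$, one shows $M$ is central in $A$ and then that $A$ normalizes $P$). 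Once $P\trianglelefteq G$, Schur--Zassenhaus supplies the complement $A$, and its abelianness is exactly the coprimality-plus-chain observation above (since $P'\ne 1$ in the nonabelian case forces $A'=1$, and if $P'=1$ then $G'\le A$ forces $A'=G''$, but $\DS(G)$ a chain with $G$ nilpotent-by-abelian structure still yields $A$ abelian after a short argument). I would present this as: reduce to $P\trianglelefteq G$ by induction, then invoke Schur--Zassenhaus, then finish with the coprime-orders argument on $P'$ and $A'$.
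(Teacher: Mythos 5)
Your proposal has genuine gaps in both parts, and in each case the missing step is exactly where the paper deploys the chain condition in a sharper way.

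For (1), you reduce to a minimal nonsolvable group and then assert that such a group must contain a perfect subgroup $P$ together with a subgroup $Q$ satisfying $1\ne Q'\ne Q$ and $Q'\nleq P$ (``a suitable subgroup of a Borel-like subgroup''). This is never proved, and as stated it is not even the right criterion: in $G=A_5$ every subgroup satisfies $Q'\leq G=P'$, so your test detects nothing, yet $A_5\notin\mathcal{DC}$ because an $S_3$ and a $D_{10}$ inside it have derived subgroups of orders $3$ and $5$. Making your route rigorous would essentially require running through Thompson's list of minimal simple groups. The paper avoids all of this with one observation you are missing: if $H$ is a nonabelian proper subgroup, then $|H'|=|(H^g)'|$ for every $g\in G$, and two members of a chain with the same finite order are equal, so $H'=(H')^g$ for all $g$, i.e.\ $H'\trianglelefteq G$ is a nontrivial proper normal subgroup. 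The only case without such an $H$ is $G$ minimal non-abelian, where $G'$ is a $p$-group by Lemma~\ref{minimal non-abelian} and $G$ is solvable; induction on $|G|$ via Lemma~\ref{zishangyichuan} finishes.

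For (2), your coprimality argument ($H'$ a nontrivial $p$-group for a minimal non-abelian $H$, $H'\cap A'=1$, and the chain forcing $A'=1$) matches the paper and correctly yields that a Hall $p'$-subgroup is abelian. But you explicitly leave the normality of $P$ unresolved: the induction-on-a-minimal-normal-subgroup sketch (``one shows $M$ is central in $A$ and then that $A$ normalizes $P$'') is not carried out, and that is the whole content of the statement. The paper's route is transfer-theoretic rather than inductive: for each prime $q\ne p$ and $Q\in{\rm Syl}_q(G)$, any $x\in N_G(Q)$ gives $L=\langle Q,x\rangle$ with $L'\leq Q$ a $q$-group, so $H'\cap L'=1$ and the chain forces $L'=1$; hence $N_G(Q)=C_G(Q)$, Burnside's normal $p$-complement theorem yields a normal $q$-complement $C(q)$, and $P=\bigcap_{q\ne p}C(q)$ is a normal Sylow $p$-subgroup with the abelian Hall $p'$-subgroup as complement. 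Without some such step your argument does not close.
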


\begin{proof}
(1) Assume $G$ is non-abelian without loss of generality. We use induction on $|G|$. By Lemma~\ref{zishangyichuan}, it suffices to show that $G$ has a non-trivial proper normal subgroup. If $G$ is minimal non-abelian, then $G'$ is a $p$-group by Lemma \ref{minimal non-abelian}. Hence $G$ is solvable.
Assume $G$ is not minimal non-abelian. Then $G$ has a non-abelian
proper subgroup $H$. Obviously, $|H'|=|(H^g)'|$ for any $g \in G$. It follows by $G\in {\mathcal DC}$ that
$H'=(H^g)'$. Since $(H^g)'=H'^g$, $H'=H'^{g}$. Thus $H'\trianglelefteq G$.

\vspace{0.10cm}
(2) Assume $G$ is non-abelian without loss of generality. Let $H$ be a minimal non-abelian subgroup of $G$. Then $H'$ is a $p$-group by Lemma \ref{minimal non-abelian}. Since $G\in {\mathcal DC}$, $G$ is solvable  by (1). Hence $G$ has a $p'$-Hall subgroup $A$. Clearly, $H'\cap A'=1$. Since $G\in {\mathcal DC}$, $A'=1$. For any prime $q\ne p$, let $x\in N_G(Q)$ and $L=\lg Q,x\rg$, where $Q\in {\rm Syl}_q(G)$. Then $L'$ is a $q$-group. Hence $H'\cap L'=1$. Since $G\in {\mathcal DC}$, $L'=1$. That implies $x\in C_G(Q)$. It follows that $N_G(Q)=C_G(Q)$. By the well-known Burnside normal $p$-complement
theorem, $G$ has a normal $q$-complement $C(q)$. Thus $P=\bigcap\limits_{q\ne p}C(q)\trianglelefteq G$. It is easy to see $P$ is a Sylow $p$-subgroup of $G$. The result holds.
\end{proof}

\begin{rem}
A ${\mathcal DC}$-group may be non-supersolvable and may has a non-abelian Sylow subgroup. For example,
let $G=SL(2,3)(\cong {\rm Q}_8\rtimes {\rm C}_3)$. Then $G$ is non-supersolvable and ${\rm Q}_8$ is a non-abelian Sylow $2$-subgroup of $G$. It is easy to get ${\rm Q}_8$ is the unique non-abelian proper subgroup of $G$. Thus $G\in {\mathcal DC}$.
\end{rem}

If $G$ is a finite $p$-group and $G\in {\mathcal DC}$, then
we said that $G$ is a ${\mathcal DC}$ $p$-group. Based on Theorem \ref{Sylow}$(2)$, the study of finite ${\mathcal DC}$-groups can be reduced to that of ${\mathcal DC}$ $p$-groups. The following theorem \ref{lizi} shows that the class of $\mathcal{DC}$ $p$-groups are quite large.

\begin{thm}\label{lizi}
Let $G$ be one of the following finite $p$-groups. Then $G$ is a ${\mathcal DC}$-group.

$(1)$ A group with cyclic derived subgroup.

$(2)$ A two-generator non-abelian group with an abelian maximal subgroup.

$(3)$ A group of maximal class of order $p^{n}$ whose the fundament subgroup $G_{1}$ satisfying $|G_{1}'|=p$, where $n\geqslant p+2$ and $p>2$. In particular, by Lemma {\rm \ref{lem=3 grp of max class}}, groups of maximal class of order $3^{n}$ also are $\mathcal {DC}$-groups, where $n\geqslant 5$.
\end{thm}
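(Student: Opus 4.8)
The plan is to treat the three cases in turn.

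\emph{Case }(1) is immediate: a cyclic $p$-group has a unique subgroup of each order, so the lattice ${\rm L}(G')$ of all subgroups of $G'$ is a chain, and since $H'\leq G'$ for every $H\leq G$ we get ${\rm DS}(G)\subseteq{\rm L}(G')$; hence ${\rm DS}(G)$ is a chain.

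For \emph{Case }(2), let $M$ be an abelian maximal subgroup. Then $M\trianglelefteq G$, $|G:M|=p$, so $G'\leq M$, $G$ is metabelian, and $G=M\langle x\rangle$ with $x^{p}\in M$ for a fixed $x\in G\setminus M$. Since $M$ is abelian, $\delta\colon M\to M,\ m\mapsto[m,x]$, is a nilpotent endomorphism of the group $M$ with $\delta(M)=[M,x]=[M,G]=G'$. I would first prove
$$ {\rm DS}(G)=\{1\}\cup\{\,\delta(N)\mid N\leq M\ \text{is}\ x\text{-invariant}\,\}. $$
Indeed $H'=1$ if $H\leq M$; and if $H\not\leq M$, putting $N=H\cap M$ one has $N\trianglelefteq H$, $|H:N|=p$, and $N$ is $x$-invariant (for $m\in N$ and a generator $y=x^{k}m_{0}$ of $H/N$, $m^{y}=m^{x^{k}}$ with $p\nmid k$, and $N$ is $\langle y\rangle$-invariant); since $N$ is abelian and $H/N$ cyclic, $H'=[N,x^{k}]$, and the metabelian commutator identity (Lemma~\ref{metabelian formula}) gives $[m,x^{k}]=\delta(u(m))$ for $m\in N$, where the operator $u=k+\binom{k}{2}\delta+\cdots$ on $M$ is invertible since $p\nmid k$; as $N$ is $\delta$-invariant, $u(N)=N$, whence $H'=\delta(N)$, and conversely $(N\langle x\rangle)'=\delta(N)$. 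Moreover every $\delta(N)$ is a $\delta$-invariant subgroup of $G'$, and each $\delta$-invariant $B\leq G'$ equals $\delta(N)$ with $N=\{m\in M\mid\delta(m)\in B\}$; so $\{\delta(N)\mid N\ x\text{-invariant}\}$ is exactly the family of $\delta$-invariant subgroups of $G'$.

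It remains to show this family is linearly ordered. Here $d(G)=2$ is used: a minimal generating set of $G$ can be taken as $\{x,g\}$ with $g\in M$, and a short computation gives $M=\langle g^{x^{i}}\mid i\geq0\rangle\langle x^{p}\rangle$, so $M$ is generated as a $\langle x\rangle$-module by $g$ and the $x$-fixed element $x^{p}$; hence $G'=\delta(M)=\langle[g,x]^{x^{i}}\mid i\geq0\rangle$ is a cyclic module over the ring $A$ of operators on $G'$ that are integral polynomials in $\delta$. This $A$ is a finite commutative local ring (a quotient of $\mathbb{Z}[\delta]$, local since $p$ and $\delta$ are nilpotent in it), $G'\cong A$ as $A$-modules, and under this isomorphism the $\delta$-invariant subgroups of $G'$ become precisely the ideals of $A$. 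Consequently $G\in{\mathcal DC}$ if and only if $A$ is a chain ring, equivalently its maximal ideal $(p,\delta)$ is principal. \emph{This is the step I expect to be the main obstacle of the theorem}: one must extract enough rigidity from $d(G)=2$ — using the module generators of $M$ above and the identity $(1+\delta)^{p}={\rm id}_{M}$, valid because $x^{p}\in M$ is centralized by $M$, which severely restricts how $\delta$ acts on $M/\mho_{1}(M)$ — to conclude that $p$ and $\delta$ are linearly dependent modulo $(p,\delta)^{2}$, so that $A$ is uniserial.

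For \emph{Case }(3), write $|G|=p^{n}$ and induct on $n$, the small-order base case (below the range where Lemma~\ref{lem=max class} applies) being handled directly from the known structure of small maximal-class groups. Since $G_{1}\trianglelefteq G$ and $|G_{1}'|=p$, the subgroup $G_{1}'$ is normal of order $p$, hence central, and as $G$ has maximal class $Z(G)=K_{n-1}(G)\cong{\rm C}_{p}$; so $G_{1}'=K_{n-1}(G)$, every non-abelian subgroup of $G_{1}$ has derived subgroup $G_{1}'$, and thus ${\rm DS}(G_{1})\subseteq\{1,K_{n-1}(G)\}$. For a maximal subgroup $M_{0}\neq G_{1}$, Lemma~\ref{lem=max class} gives that $M_{0}$ has maximal class and order $p^{n-1}$; using the standard facts that $K_{i}(M_{0})=K_{i+1}(G)$ for $i\geq2$ and that the fundamental subgroup of $M_{0}$ lies in $G_{1}$ (hence has derived subgroup of order at most $p$), the induction hypothesis gives ${\rm DS}(M_{0})\subseteq\{1\}\cup\{K_{i}(G)\mid i\geq3\}$. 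Since every subgroup of $G$ is contained in a maximal one and ${\rm DS}(G)=\{G'\}\cup{\rm DS}(G_{1})\cup\bigcup_{M_{0}\neq G_{1}}{\rm DS}(M_{0})$, we get
$$ {\rm DS}(G)\subseteq\{1\}\cup\{K_{i}(G)\mid 2\leq i\leq n-1\}, $$
a chain since the lower central series of $G$ is a chain; hence $G\in{\mathcal DC}$. The consequence for $3$-groups of maximal class follows, since for $|G|=3^{n}$ with $n\geq5$ we have $n\geq p+2$, and by Lemma~\ref{lem=3 grp of max class} the fundamental subgroup $G_{1}$ is abelian or minimal non-abelian, so $|G_{1}'|\leq p$ and the argument above (with $|G_{1}'|\leq p$ in place of $|G_{1}'|=p$) applies.
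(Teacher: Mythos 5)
Your Case (1) is fine, but Cases (2) and (3) both have genuine gaps. In Case (2) you correctly reduce the problem to showing that the $\delta$-invariant subgroups of $G'$ (equivalently, the ideals of the operator ring $A$ acting on the cyclic $A$-module $G'$) form a chain, but you then explicitly leave the decisive step --- that $A$ is uniserial, i.e.\ that $(p,\delta)$ is principal modulo $(p,\delta)^2$ --- as an expected ``main obstacle'' without proving it. That step \emph{is} the theorem, so the argument is incomplete. The paper avoids the module machinery entirely by inducting on ${\rm cl}(G)$: for any non-abelian $H\leq G$ one has $H'\leq G'\leq A$ (the abelian maximal subgroup), so $G=AH$ normalizes $H'$, hence $H'\cap Z(G)\neq 1$; combined with $G'\cap Z(G)=K_c(G)\cong {\rm C}_p$ (Lemma~\ref{eryuanyoujiaohuan}) this forces $K_c(G)\leq H'$ for every non-abelian $H$, and the Correspondence Theorem applied to $G/K_c(G)$ (which satisfies the induction hypothesis) finishes the proof. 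If you want to salvage your route, you would need to supply the uniseriality argument; the paper's normality-plus-center argument is the missing idea.

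In Case (3) the induction does not deliver what you use. The inductive hypothesis applied to a maximal subgroup $M_0\neq G_1$ only yields that ${\rm DS}(M_0)$ is a chain; it does not yield your stated containment ${\rm DS}(M_0)\subseteq\{1\}\cup\{K_i(G)\mid i\geq 3\}$, and without some containment of this kind you cannot conclude that the union $\bigcup_{M_0}{\rm DS}(M_0)\cup{\rm DS}(G_1)\cup\{G'\}$ over \emph{different} maximal subgroups is totally ordered --- derived subgroups coming from distinct maximal subgroups need not be comparable a priori. (The base case $n=p+2$, where Lemma~\ref{lem=max class} no longer applies to the $M_0$, is also waved away.) The paper's proof instead establishes the single statement that $G_1'\leq H'$ for \emph{every} non-abelian $H\leq G$, by taking a minimal non-abelian $K\leq H$ (so $|K'|=p=|G_1'|$) and deriving a contradiction from $K'\nleq G_1'$: one shows $K'G_1'\unlhd G$, uses the ${\rm N/C}$ Theorem and the fact that every maximal subgroup $M\neq G_1$ has maximal class with $|Z(M)|=p$ to force $G_1\leq C_G(K'G_1')$, and concludes $K'G_1'\leq Z(G)$, contradicting $|Z(G)|=p$. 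Once $G_1'\leq H'$ is known, one quotients by $G_1'$ and invokes part (2). Your observation that $G_1'=Z(G)=K_{n-1}(G)$ is correct, but it is not by itself enough to run the induction you describe.
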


\begin{proof}
$(1)$ It is obvious.

\vspace{0.10cm}
$(2)$ We give a proof by induction on ${\rm cl}(G)$. If ${\rm cl}(G)=2$, then, by Lemma~\ref{eryuanyoujiaohuan}, $G'=K_2(G)\cong {\rm C}_p$. The conclusion holds. Assume ${\rm cl}(G)=c>2$. Let $\og=G/K_c(G)$. Then ${\rm cl}(\og)=c-1$.
By induction hypothesis, $\overline{G}\in {\mathcal DC}$. Let $H$ be a  non-abelian subgroup of $G$. We assert that $K_c(G)\leq H'$:
Let $A$ be an abelian maximal subgroup of $G$.  Clearly, $G=AH$ and $H'\leq A$. Hence $H'\trianglelefteq G$. Thus $H'\cap Z(G)\ne 1$. By Lemma~\ref{eryuanyoujiaohuan}, $G'\cap Z(G)=K_c(G)\cong {\rm C}_p$. It follows that $H'\cap Z(G)=K_c(G)$. The assertion holds. Thus  ${\rm DS}(G)$
is also a chain by Correspondence Theorem. Therefore $G\in {\mathcal DC}$.

\vspace{0.10cm}
$(3)$ Let $\og=G/G_{1}'$ and $\overline{G_{1}}=G_{1}/G_{1}'$. Then $d(\overline{G})=2$ and $\overline{G_{1}}$ is an abelian maximal subgroup of $\overline{G}$. It follows by $(2)$ that $\overline{G}\in {\mathcal DC}$.
Let $H$ be a non-abelian subgroup of $G$. By Correspondence Theorem it is enough to show $G_{1}'\leq H'$.
Take $K\leq H$ and $K$ is minimal non-abelian. Clearly, it is enough to show $K'=G_{1}'$.
Since $|K'|=|G_1'|=p$, we only need to show $K'\leq G_1'$. Assume $K'\nleq G_{1}'$. Then $K\nleq G_{1}$ and Hence $G=KG_{1}$. It is easy to get that
\begin{center}
$[K'G_{1}',K]=[K',K]=1$ and $[K'G_{1}', G_{1}]\leq [G',G_{1}]\leq [G_{1},G_{1}]=G_{1}'.$
\end{center}
Thus $K'G_{1}'\unlhd G$. By ${\rm N/C}$ Theorem,
$|G:C_G(K'G_{1}')|\leqslant p$. Let $L$ be a maximal subgroup of $G$ which contained in $C_G(K'G_{1}')$. Then
$ K'G_{1}'\leq Z(L)$. For each maximal subgroup $M\ne G_{1}$, we have $M$ is of maximal class by Lemma \ref{lem=max class}.
Hence $|Z(M)|=p$. Thus $L=G_{1}$. That means $G_{1}\leq C_G(K'G_{1}')$. It follows that $[K'G_{1}',G_{1}]=1$. In particular, $[K',G_{1}]=1$. Thus $[K',G]=[K',KG_1]=1$. That means, $K'\leq Z(G)$. So $K'G_{1}'\leq Z(G)$. This contradicts that $|Z(G)|=p$.
\end{proof}

The following results illustrate that
for a non-abelian ${\mathcal DC}$ $p$-group $G$, $d(G)$ can not be bounded. However, $d(G')$ can be bounded.

\begin{prop}\label{pianxuji}
Let $G$ be a finite non-abelian $p$-group and $A$ a finite $p$-group. Then 

$(1)$ if $A'=1$, then $G\ast A\in {\mathcal DC}$ if and only if $G\in {\mathcal DC}$.

$(2)$ $G\times A\in {\mathcal DC}$ if and only if $G\in {\mathcal DC}$ and $A'=1$.
\end{prop}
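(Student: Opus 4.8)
The plan is to reduce both parts to Lemma~\ref{zishangyichuan} (subgroups and quotients of a ${\mathcal DC}$-group are again ${\mathcal DC}$) together with a single structural observation about derived subgroups of subgroups of a direct product. The observation is: if $A$ is abelian and $\pi\colon G\times A\to G$ is the first projection, then $H'=\pi(H)'\times 1$ for every $H\leq G\times A$. Indeed any commutator $[(g_1,a_1),(g_2,a_2)]$ equals $([g_1,g_2],[a_1,a_2])=([g_1,g_2],1)$, so $H'\leq \pi(H)'\times 1$; conversely every generator $[g_1,g_2]$ of $\pi(H)'$ (with $g_1,g_2\in\pi(H)$) is realized as $[(g_1,a_1),(g_2,a_2)]$ for suitable $a_1,a_2$, so $\pi(H)'\times 1\leq H'$. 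Since $\pi$ carries the subgroups of $G\times A$ onto exactly the subgroups of $G$, this gives ${\rm DS}(G\times A)=\{K'\times 1\mid K\leq G\}$, and $K'\mapsto K'\times 1$ is an order-isomorphism ${\rm DS}(G)\to{\rm DS}(G\times A)$.

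For part $(2)$, the ``if'' direction is then immediate: if $G\in{\mathcal DC}$ and $A'=1$, then ${\rm DS}(G\times A)$ is order-isomorphic to the chain ${\rm DS}(G)$, so $G\times A\in{\mathcal DC}$. For the ``only if'' direction, suppose $G\times A\in{\mathcal DC}$. Since $G\cong G\times 1\leq G\times A$, Lemma~\ref{zishangyichuan} gives $G\in{\mathcal DC}$. If we had $A'\neq 1$, then $A$ is non-abelian and contains a minimal non-abelian subgroup $L$ with $|L'|=p$ by Lemma~\ref{minimal non-abelian}; as $G$ is non-abelian (a standing hypothesis) it contains a minimal non-abelian subgroup $K$ with $|K'|=p$. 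Then $K\times 1$ and $1\times L$ are subgroups of $G\times A$ whose derived subgroups $K'\times 1$ and $1\times L'$ are distinct subgroups of order $p$ meeting trivially, contradicting that ${\rm DS}(G\times A)$ is a chain. Hence $A'=1$.

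For part $(1)$, write $X=G\ast A$. By definition of central product, $G$ is a subgroup of $X$, and the map $G\times A\to X$, $(g,a)\mapsto ga$, is a surjective homomorphism (its kernel $\{(z,z^{-1})\mid z\in G\cap A\}$ lies in the centre of $G\times A$). Thus $X$ is a quotient of $G\times A$ and contains a copy of $G$. If $X\in{\mathcal DC}$, then $G\in{\mathcal DC}$ by Lemma~\ref{zishangyichuan}; if $G\in{\mathcal DC}$ and $A'=1$, then $G\times A\in{\mathcal DC}$ by part $(2)$, hence $X\in{\mathcal DC}$, again by Lemma~\ref{zishangyichuan}. (One could instead prove $(1)$ directly via the same commutator computation applied to subgroups of $X$, noting $[G,A]=1$ and $A'=1$, but routing through $(2)$ is shorter.)

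I expect the only genuine subtlety to be the identity $H'=\pi(H)'\times 1$ for an \emph{arbitrary} subgroup $H$ of $G\times A$ (not just one of the form $K\times B$); this is the point that makes the whole argument go through, and everything else is bookkeeping with Lemma~\ref{zishangyichuan}. A minor care point is to record explicitly, in the ``only if'' half of $(2)$, that $A'\neq 1$ forces $A$ to be non-abelian so that the minimal non-abelian subgroup $L$ exists, and to use the hypothesis that $G$ itself is non-abelian when extracting $K$.
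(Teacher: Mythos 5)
Your proof is correct, and the key identity $H'=\pi(H)'\times 1$ for an arbitrary subgroup $H$ of $G\times A$ (with $A$ abelian) is exactly the engine of the paper's argument: the paper forms $K=\{g\in G\mid \exists\, a\in A \text{ with } ga\in H\}$ for $H\leq G\ast A$ and shows $H'=K'$, which is your projection observation carried out directly inside the central product. The difference is purely one of organization. The paper proves $(1)$ first, on the central product itself, and then reads off the ``if'' half of $(2)$ as the special case $G\cap A=1$; you prove the direct-product statement first and recover the central product as a central quotient of $G\times A$ via Lemma~\ref{zishangyichuan}. Both routes are valid; the paper's is marginally more self-contained (no need to verify that $G\ast A$ is a quotient of $G\times A$), while yours isolates the cleaner structural fact that ${\rm DS}(G)\to{\rm DS}(G\times A)$, $K'\mapsto K'\times 1$, is an order-isomorphism. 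In the ``only if'' half of $(2)$ you take a small detour: the paper simply notes that $G'$ and $1\times A'$ are comparable elements of ${\rm DS}(G\times A)$ meeting inside $G\cap A=1$, so $G'\neq 1$ forces $A'=1$; your passage through minimal non-abelian subgroups of $G$ and $A$ reaches the same contradiction but is not needed.
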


\begin{proof}
$(1)$ We only need to prove that ${\rm DS}(G\ast A)={\rm DS}(G)$. Obviously, ${\rm DS}(G)\subseteq {\rm DS}(G\ast A)$.
Take $H\leq G\ast A$. Let $K=\{g\in G\di \exists a\in A ~s.t.~ ga\in H\}$. Then $K\leq G$, and hence $H'=K'$.
Thus ${\rm DS}(G\ast A)\subseteq {\rm DS}(G)$. The result holds.

\vspace{0.10cm}
$(2)$ $(\Leftarrow)$ It is an immediate consequence of $(1)$.

$(\Rightarrow)$ Obviously, $G\in {\mathcal DC}$ by Lemma \ref{zishangyichuan}.
Since $G\times A\in {\mathcal DC}$,  $G'\cap A'=A'$ or $G'\cap A'=G'$. It follows from $G\cap A=1$ and $G'\ne 1$ that $A'=1$.
\end{proof}

In order to prove $d(G')$ has an upper bound, we need the following lemmas.

\begin{lem}\label{Pxingzhi1}
Let $G$ be a finite non-abelian ${\mathcal DC}$ $p$-group. Then

{\rm (1)} $K_{i}(G)/K_{i+1}(G)$ is cyclic for all $2\leqslant i\leqslant {\rm cl}(G)$.

{\rm (2)} there exist $a_i\in K_{i-1}(G),~b_i\in G$ such that $K_i(G)=\lg a_i,b_i\rg'$, where $2\leqslant i\leqslant {\rm cl}(G)$.  In particular,
if $H\leq G$ and $|K_i(G)|\leqslant |H'|$, then $K_i(G)\leq H'$.

{\rm (3)}  if $d(G')=2$, then $G'\cap Z(G)$ is cyclic.

{\rm (4)} if $\exp(G')=p$, then $G'\cap Z(G)=K_{c}(G)\cong {\rm C}_{p}$, where $c={\rm cl}(G)$.
\end{lem}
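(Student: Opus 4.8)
The plan is to establish the four parts in order, using the $\mathcal{DC}$-hypothesis repeatedly through the simple observation that whenever $H, K \le G$ are such that $|H'| = |K'|$ and $H' \trianglelefteq G$ (or more generally $H'$ is the unique subgroup of its order in the relevant location), the chain condition forces $H' = K'$. First, for (1), I would argue by induction on $c = {\rm cl}(G)$, or equivalently work in the quotient $\G = G/K_{i+1}(G)$ so that $K_i(\G)$ is central in $\G$ and has exponent dividing... — actually the cleanest route: suppose some $K_i(G)/K_{i+1}(G)$ is non-cyclic, take $i$ maximal with this property. Passing to $G/K_{i+1}(G)$ we may assume $K_i(G)$ is central of rank $\ge 2$. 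Then I would produce two subgroups $H, K \le G$ with $H', K'$ distinct subgroups of $K_i(G)$ of the same order, contradicting the chain condition; the source of these subgroups should be preimages in $G$ of suitable two-generator subgroups, using that $K_i(G) = [K_{i-1}(G), G]$ is generated by commutators $[x, g]$ with $x \in K_{i-1}(G)$, $g \in G$.

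For (2), the key input is Lemma \ref{lem:=G'=H'G_{3}} together with the structure of the lower central series: since $K_i(G)/K_{i+1}(G)$ is cyclic by (1), a single commutator $[a_i, b_i]$ with $a_i \in K_{i-1}(G)$, $b_i \in G$ generates $K_i(G)$ modulo $K_{i+1}(G)$, and then an iteration (or an application of Lemma \ref{lem:=G'=H'G_{3}} to the subgroup $\lg a_i, b_i\rg$, whose derived subgroup together with $K_{i+1}(G)$ exhausts... ) shows $\lg a_i, b_i \rg' = K_i(G)$. Once we have $H_i := \lg a_i, b_i \rg$ with $H_i' = K_i(G)$, the ``in particular'' claim is immediate from $\mathcal{DC}$: for any $H \le G$ with $|K_i(G)| \le |H'|$, the chain condition gives $H_i' \le H'$ or $H' \le H_i'$, and the size inequality rules out the latter unless equality holds, so $K_i(G) = H_i' \le H'$.

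For (3): if $d(G') = 2$, then Lemma \ref{lem=blackburn} gives $G'$ abelian; I want to show $G' \cap Z(G)$ is cyclic. Suppose not; then $G' \cap Z(G)$ contains a subgroup $\cong {\rm C}_p^2$. Using (2) I would locate two-generator subgroups whose derived subgroups are distinct subgroups of $G' \cap Z(G)$ of equal order (again via the commutator-generation of the terms $K_i$ and the fact that $G' \cap Z(G)$ meets the lower central series in a controlled way), contradicting the chain. Finally (4): if $\exp(G') = p$, first note $K_c(G) \cong {\rm C}_p$ by (1) (it is cyclic of exponent dividing $p$), and $K_c(G) \le Z(G)$ since $[K_c(G), G] = K_{c+1}(G) = 1$, so $K_c(G) \le G' \cap Z(G)$. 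For the reverse inclusion, if $G' \cap Z(G)$ properly contained $K_c(G)$ we would again get two subgroups of $G$ with equal-order distinct derived subgroups inside the elementary abelian group $G' \cap Z(G)$, contradiction; here I would use part (2) to realize $K_c(G)$ as some $\lg a_c, b_c \rg'$ and produce a competing subgroup whose derived subgroup is a different line in $G' \cap Z(G)$.

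The main obstacle will be part (3) — and the analogous step in (4) — namely manufacturing, from an assumed non-cyclic section, an \emph{explicit} pair of subgroups whose derived subgroups are genuinely distinct of the same order. Parts (1) and (2) are essentially bookkeeping with the lower central series plus Lemma \ref{lem:=G'=H'G_{3}} and Lemma \ref{lem=G' is a set of commutators}; but in (3) one must control how $G' \cap Z(G)$ sits relative to the $K_i(G)$'s and exploit $d(G')=2$ (hence $G'$ abelian, via Lemma \ref{lem=blackburn}) to get enough room for two independent commutator subgroups. I expect the argument there to hinge on choosing generators $x, y$ with $G' = \lg [x,y], [x,y,x], \dots \rg$ (Lemma \ref{lem=G' is a set of commutators} guarantees $G' = \{[x,g] : g \in G\}$ for suitable $x$) and then comparing $\lg x, y \rg'$ with $\lg x, yz \rg'$ for a well-chosen central $z$, showing these can be forced to differ if $G' \cap Z(G)$ were too big.
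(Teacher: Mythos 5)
Your overall strategy for parts (1)--(3) matches the paper's: reduce (1) to a central section of rank at least $2$ and contradict the chain condition with two incomparable cyclic subgroups of the form $\lg a,b\rg'=\lg[a,b]\rg$; realize each $K_i(G)$ as a two-generator derived subgroup for (2); and in (3) use Lemma \ref{lem=G' is a set of commutators} to write two independent elements of $G'\cap Z(G)$ as $[x,y]$ and $[x,z]$, so that $\lg x,y\rg'=\lg[x,y]\rg$ and $\lg x,z\rg'=\lg[x,z]\rg$ are incomparable. One soft spot in (2): the step you defer to ``an iteration'' is precisely where the $\mathcal{DC}$ hypothesis must be invoked again. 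Knowing $[a_i,b_i]$ generates $K_i(G)$ modulo $K_{i+1}(G)$ only gives $\lg a_i,b_i\rg'K_{i+1}(G)=K_i(G)$; you still need $K_{i+1}(G)\le\lg a_i,b_i\rg'$. The paper gets this by backward induction: $K_{i+1}(G)=\lg a_{i+1},b_{i+1}\rg'$ is itself a derived subgroup, and since $\lg a_i,b_i\rg'\nleq K_{i+1}(G)$, the chain condition forces $K_{i+1}(G)\le\lg a_i,b_i\rg'$. Your fallback, Lemma \ref{lem:=G'=H'G_{3}}, is stated only for $G'=H'K_3(G)$ and hence covers only $i=2$.

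The genuine gap is in (4). You propose to realize $K_c(G)$ as $\lg a_c,b_c\rg'$ and, if $G'\cap Z(G)>K_c(G)$, to ``produce a competing subgroup whose derived subgroup is a different line in $G'\cap Z(G)$.'' That construction needs an element of $(G'\cap Z(G))\setminus K_c(G)$ that is a \emph{single} commutator $[x,y]$, so that $\lg x,y\rg'$ is the line it spans; but part (4) carries no hypothesis on $d(G')$, so Lemma \ref{lem=G' is a set of commutators} is unavailable, and a central element of $G'$ is in general only a product of commutators. The paper avoids this entirely with a direct lower-central-series computation: choose $t$ with $N=G'\cap Z(G)\le K_t(G)$ and $N\nleq K_{t+1}(G)$; by part (1) and $\exp(G')=p$ the section $K_t(G)/K_{t+1}(G)$ has order $p$, so $K_t(G)=NK_{t+1}(G)$; then $K_{t+1}(G)=[NK_{t+1}(G),G]=[K_{t+1}(G),G]=K_{t+2}(G)$, which by nilpotency forces $K_{t+1}(G)=1$, whence $t=c$ and $N=K_c(G)\cong{\rm C}_p$. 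You should replace your sketch of (4) with this argument (or supply some other mechanism for manufacturing the competing derived subgroup, which I do not see).
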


\begin{proof}
(1) Assume $G$ is a counterexample of minimal order. Then there exists $t$ such that $K_{t}(G)/K_{t+1}(G)$ is non-cyclic. Take a normal subgroup $N$ of $G$ satisfying $K_{t+1}(G)\leq N\leq K_t(G)$ and $K_t(G)/N\cong {\rm C}^{2}_p$. Let $\og=G/N$. Obviously, $\overline{G}\in {\mathcal DC}$ by Lemma \ref{zishangyichuan}. If $N\neq 1$, then $K_{t}(\overline{G})/K_{t+1}(\overline{G})=K_t(G)/N\cong {\rm C}^{2}_p$. This contradicts the minimality of $G$. It follows that $N=1$. Thus $K_{t+1}(G)=1$ and $K_{t}(G)\cong {\rm C}^{2}_p$. Hence $K_{t}(G)\leq Z(G)$.
Therefore there exist $a_{1},a_{2}\in K_{t-1}(G),~b_{1},b_{2}\in G$ such that $K_t(G)=\lg [a_{1},b_{1}]\rg\times \lg [a_{2},b_{2}]\rg$. In particular, $\lg a_{1},b_{1}\rg'\cap \lg a_{2},b_{2}\rg'=1$.
This contradicts $G\in {\mathcal DC}$.

\vspace{0.10cm}
(2) We prove by backward induction on $i$. If $i= {\rm cl}(G)$, then $K_i(G)$ is cyclic by (1). Hence the result holds.
Assume $K_{i+1}(G)=\lg a_{i+1},b_{i+1}\rg'$. Then $K_i(G)=\lg [a_i,b_i], K_{i+1}(G)\rg$ by (1), where $a_i\in K_{i-1}(G),~b_i\in G$.
In particular, $\lg a_i,b_i\rg'\nleq K_{i+1}(G)$.  Since $G\in {\mathcal DC}$,
$K_{i+1}(G)=\lg a_{i+1},b_{i+1}\rg'\leq \lg a_i,b_i\rg'$. Hence $K_i(G)=\lg a_i,b_i\rg'$.

\vspace{0.10cm}
(3) If the result does not hold, then, by Lemma~\ref{lem=G' is a set of commutators}, there exist $x,y,z\in G$ such that $G'\cap Z(G)\ge \lg [x,y],[x,z]\rg\cong {\rm C}_p^2$. It follows that $\lg x,y\rg'=\lg [x,y]\rg \ne \lg [x,z]\rg=\lg x,z\rg'$. This contradicts $G\in {\mathcal DC}$.

\vspace{0.10cm}
(4) Let $N=G'\cap Z(G)$. Then there exists $t$ such that $N\leq K_{t}(G)$ and $N\nleq K_{t+1}(G)$. Since $G\in {\mathcal DC}$, $K_{t}(G)/K_{t+1}(G)$ is cyclic by $(1)$. It follows from $\exp(G')=p$ that $|K_{t}(G)/K_{t+1}(G)|=p$. Hence $K_{t}(G)=NK_{t+1}(G)$. Moreover, $$K_{t+1}(G)=[K_{t}(G),G]=[NK_{t+1}(G),G]=[K_{t+1}(G),G]=K_{t+2}(G).$$
Since $G$ is nilpotent, $K_{t+1}(G)=1$. Hence ${\rm cl}(G)=t$, $N=K_{t}(G)$ and $|K_{t}(G)|=p$.
\end{proof}

\begin{prop}
Let $G$ be a finite non-abelian regular ${\mathcal DC}$ $p$-group, $p$ odd prime. Then $d(G')\leqslant p-2$.
\end{prop}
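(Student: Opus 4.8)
The plan is to combine the class-by-class structure of $G'$ from Lemma~\ref{Pxingzhi1} with the regularity hypothesis to force $\exp(G')$ to be small, and then feed that into Theorem~\ref{thm1.2}. First I would record that, since $G$ is regular, the power map $x\mapsto x^p$ behaves well: for a regular $p$-group one has $\exp(\mho_1(G))=\exp(G)/p$ and, more to the point, $G$ is $p$-abelian precisely under the numerical conditions isolated in Lemma~\ref{yajiaohuan} once we are in the two-generator situation. The reduction to two generators is available because Lemma~\ref{Pxingzhi1}(2) produces, for each $i$ with $2\le i\le c:={\rm cl}(G)$, elements $a_i\in K_{i-1}(G)$, $b_i\in G$ with $K_i(G)=\langle a_i,b_i\rangle'$; working inside the two-generator subgroup $\langle a_i,b_i\rangle$ (which is again a regular $\mathcal{DC}$ $p$-group by Lemmas~\ref{zishangyichuan} and \ref{regular1}-adjacent closure) lets us control $K_i(G)/K_{i+1}(G)$ via metabelian commutator calculus, noting that by Lemma~\ref{lem=blackburn} the subgroup $\langle a_i,b_i\rangle'$ is abelian, so $\langle a_i,b_i\rangle$ is metabelian and Lemmas~\ref{metabelian formula} and \ref{yajiaohuan} apply.

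The key chain of steps I envisage is: (i) show $\exp(G')\le p$. Suppose not; then some factor $K_i(G)/K_{i+1}(G)$, which is cyclic by Lemma~\ref{Pxingzhi1}(1), has order $\ge p^2$, and in fact one can push the obstruction down to $i=2$ using Lemma~\ref{lem=Blackburn} (the exponents of the lower central factors are non-increasing), so $G'/K_3(G)$ is cyclic of order $\ge p^2$. Then inside a two-generator section realizing $G'$ (using Lemma~\ref{lem=G' is a set of commutators}: $G'=\{[x,g]\mid g\in G\}$ for suitable $x$, hence $G'=\langle x,g_0\rangle'$ for a single $g_0$ achieving a generator of $G'$ modulo $K_3$) one gets a two-generator regular metabelian group whose derived subgroup is non-cyclic-as-a-$p$-group-exponent but still, by Lemma~\ref{lem=blackburn}, abelian; applying Lemma~\ref{regular2} to the relation forced by $\exp(G'/K_3)\ge p^2$ together with regularity yields two distinct one-dimensional derived subgroups of subgroups, contradicting $G\in\mathcal{DC}$. (ii) Having $\exp(G')=p$, Lemma~\ref{Pxingzhi1}(4) gives $G'\cap Z(G)=K_c(G)\cong {\rm C}_p$, and then each factor $K_i(G)/K_{i+1}(G)$ is elementary abelian of rank $1$, so $|G'|=p^{c-1}$ and $d(G')\le c-1$. (iii) Bound $c$: since $G$ is regular and $\exp(G')=p$, one gets ${\rm cl}(G)\le p-1$ (a regular $p$-group with $\exp(G')=p$ has nilpotency class at most $p-1$, because ${\rm cl}(G)\ge p$ would force, via the collection formula / Lemma~\ref{regular2}, an element of order $p^2$ in $G'$). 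Hence $c-1\le p-2$, giving $d(G')\le p-2$.

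Alternatively, and perhaps more cleanly, once $\exp(G')=p$ is established I would invoke Lemma~\ref{yajiaohuan}: in any two-generator section $\langle a,b\rangle$ with $\langle a,b\rangle'\le G'$ we have $\exp(\langle a,b\rangle')\le p$, so $\langle a,b\rangle$ is $p$-abelian iff ${\rm cl}(\langle a,b\rangle)<p$; combined with Theorem~\ref{thm1.2} ($d(G')\le p$, with $d(G')=p$ forcing $G'\cong {\rm C}_p^p$) the only way to have $d(G')\in\{p-1,p\}$ would be $c-1\ge p-1$, i.e. class $\ge p$, and I would rule this out by showing a regular $\mathcal{DC}$ $p$-group of class $\ge p-1$ with $\exp(G')=p$ cannot exist — here the regularity is exactly what is needed, because for a regular group class $\ge p$ combined with $K_i/K_{i+1}$ cyclic (Lemma~\ref{Pxingzhi1}(1)) forces $\mho_1(K_2(G))\ne 1$ by Lemma~\ref{regular2}, contradicting $\exp(G')=p$; the borderline class $=p-1$ case is handled by the same collection identity showing $[a^p,b]\ne 1$ in a suitable two-generator section.

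The main obstacle I expect is step (iii): getting the sharp bound ${\rm cl}(G)\le p-1$ from regularity plus $\exp(G')=p$, and in particular separating out the delicate class $=p-1$ situation — the cyclic lower-central factors (Lemma~\ref{Pxingzhi1}(1)) give $|G'|=p^{{\rm cl}(G)-1}$ easily, so everything hinges on this class bound, and I would need to be careful that the metabelian collection formula (Lemma~\ref{metabelian formula}) is legitimately applicable, which it is because Lemma~\ref{lem=blackburn} tells us the relevant two-generator subgroups have abelian derived subgroup. A secondary subtlety is checking that the two-generator sections extracted via Lemma~\ref{lem=G' is a set of commutators} or Lemma~\ref{Pxingzhi1}(2) are still regular — this follows since subgroups of regular $p$-groups are regular, so it is routine but must be stated.
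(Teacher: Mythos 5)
Your step (i) is where the argument breaks down: the claim that $\exp(G')\leqslant p$ for every finite non-abelian regular $\mathcal{DC}$ $p$-group is false. Take any $p$-group of class $2$ with $G'$ cyclic of order $p^2$ (e.g.\ the upper unitriangular $3\times 3$ matrices over $\mathbb{Z}/p^2\mathbb{Z}$): it is regular because its class is $2<p$, it is a $\mathcal{DC}$-group by Theorem \ref{lizi}(1) since $G'$ is cyclic, and yet $\exp(G')=p^2$. So the contradiction you hope to extract in (i) --- ``two distinct one-dimensional derived subgroups'' --- cannot arise in general: when $G'$ is cyclic all derived subgroups of subgroups automatically form a chain. (The paper's remark after Theorem \ref{thm=d(G')=p->G' ele-abel} makes the same point: $\exp(G')$ is unbounded when $d(G')<p$.) The missing idea is the reduction the paper performs instead: set up a minimal counterexample and pass to $\overline{G}=G/\Phi(G')$, which preserves $d(G')$, regularity and the $\mathcal{DC}$ property (Lemma \ref{zishangyichuan}); minimality then forces $\Phi(G')=1$, i.e.\ $G'$ elementary abelian. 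One gets $\exp(G')=p$ only after this quotient, not for the original group.

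The second gap is step (iii). The assertion that a regular $p$-group with $\exp(G')=p$ has class at most $p-1$ is left unproved (``would force \dots an element of order $p^2$ in $G'$'' is not an argument), and I do not see how to extract it in that generality from the quoted lemmas. The paper gets the class bound only after a further reduction: Lemma \ref{Pxingzhi1}(2) together with minimality forces $d(G)=2$ for the counterexample itself --- not merely for auxiliary two-generator subgroups, whose class may be smaller than ${\rm cl}(G)$ and hence yields no bound on it. Then $G$ is metabelian (as $G'$ is elementary abelian) and $p$-abelian (regular with $\mho_1(G')=1$), so Lemma \ref{yajiaohuan} applies to $G$ directly and gives ${\rm cl}(G)<p$, contradicting ${\rm cl}(G)=d(G')+1\geqslant p$ from Lemma \ref{Pxingzhi1}(1). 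Your ``alternative'' paragraph does gesture at Lemma \ref{yajiaohuan}, but applies it only to two-generator sections, which does not bound ${\rm cl}(G)$. Your step (ii) and the overall shape of the strategy (cyclic lower central factors, $|G'|=p^{{\rm cl}(G)-1}$, then bound the class) do match the paper, but as written the proof does not go through without the quotient-by-$\Phi(G')$ reduction and the $d(G)=2$ reduction.
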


\begin{proof}
Assume $G$ is a counterexample of minimal order. Then $d(G')\geqslant p-1$.
Let $\overline{G}=G/\Phi(G')$. Then $d(\og')=d(G')$.
 By the minimality of $G$, $\Phi(G')=1$. Hence $G'\cong {\rm C}_{p}^{d(G')}$. So ${\rm cl}(G)=d(G')+1\geqslant p$ by Lemma~\ref{Pxingzhi1}$(1)$.
On the other hand, from Lemma~\ref{Pxingzhi1}$(2)$ and the minimality of $G$ it follows that $d(G)=2$.
Since $G$ is regular and $\mho_{1}(G')=1$, $G$ is $p$-abelian. It follows  by Lemma \ref{yajiaohuan} that ${\rm cl}(G)<p$.
This is a contradiction.
\end{proof}

\begin{lem}\label{daoqunshengchengyuan1}
Let $G$ be a finite non-abelian ${\mathcal DC}$ $p$-group with an abelian maximal subgroup. Then $d(G')\leqslant p-1$.
\end{lem}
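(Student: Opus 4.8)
\textbf{Proof plan for Lemma \ref{daoqunshengchengyuan1}.}
The plan is to exploit the structure that an abelian maximal subgroup $A$ forces, combined with the $\mathcal{DC}$ constraint. First I would observe that since $A$ is abelian and maximal, $G' \leq A$ (a standard fact: $G/A$ has order $p$, so $G' \leq A$ when $A \trianglelefteq G$), and moreover every subgroup $H$ with $HA = G$ satisfies $H' \trianglelefteq G$ because $H' = [H,H] \leq [G,A] \cdot (\text{stuff in } A)$ is normalized by both $H$ and $A$. The key reduction is to pass to $\overline{G} = G/\Phi(G')$, where $\overline{G'} = G'/\Phi(G') \cong \mathrm{C}_p^{d(G')}$ is elementary abelian; by Lemma \ref{zishangyichuan} $\overline{G} \in \mathcal{DC}$ and it still has an abelian maximal subgroup (the image of $A$), and $d(\overline{G'}) = d(G')$. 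So without loss of generality I may assume $G' \cong \mathrm{C}_p^{d}$ with $d = d(G')$, in particular $\exp(G') = p$.

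Next, with $\exp(G') = p$, Lemma \ref{Pxingzhi1}(4) gives $G' \cap Z(G) = K_c(G) \cong \mathrm{C}_p$ where $c = \mathrm{cl}(G)$, and Lemma \ref{Pxingzhi1}(1) forces each $K_i(G)/K_{i+1}(G)$ to be cyclic, hence of order exactly $p$ (since the quotients have exponent dividing $p$). Counting along the lower central series from $K_2(G) = G'$ down to $K_{c+1}(G) = 1$ yields $|G'| = p^{c-1}$, so $d(G') = d = c - 1$, i.e. $c = d+1$. Now I would look at the action on $G'$: since $A$ is an abelian maximal subgroup containing $G'$, pick $g \in G \setminus A$, so $G = \langle A, g\rangle$ and the commutator map $a \mapsto [a,g]$ is a homomorphism $A \to G'$ whose image, together with iterated commutators, generates $G'$. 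More usefully, $G' = \langle [A,g] \rangle$ and then $K_{i+1}(G) = [K_i(G),G] = [K_i(G),g]$; combined with Lemma \ref{lem:=G'=H'G_{3}}-type reasoning and the fact that $g$ acts on the elementary abelian $G'$ as a single unipotent transformation with a single Jordan block (because the chain $G' > K_3 > \cdots > 1$ of $\langle g\rangle$-invariant subgroups has all quotients of order $p$ and is the lower central series), I would conclude that $\langle g \rangle$ acts on $G'$ with minimal polynomial $(x-1)^{c-1} = (x-1)^{d}$.

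The final step is where the bound $d \leq p-1$ emerges: a unipotent automorphism of order dividing $p$ acting on an $\mathbb{F}_p$-vector space has minimal polynomial dividing $(x-1)^p$ (since $(x-1)^p = x^p - 1$ in characteristic $p$ and $g^p$ acts trivially on $G'$ — here one needs $g$, or at least its action on $G'$, to have order dividing $p$, which follows because $G/C_G(G')$ embeds in a situation controlled by the abelian maximal subgroup, or more directly because in $\overline{G}$ one has $\mho_1(G) \leq$ something centralizing $G'$; this is the point requiring care). Hence $(x-1)^{d} \mid (x-1)^p$ forces $d \leq p$, and then ruling out $d = p$ — using that $d = p$ would give $\mathrm{cl}(G) = p+1$ and a group with abelian maximal subgroup and $d(G) = 2$, where Lemma \ref{eryuanyoujiaohuan} pins $Z(G) \cap G' = K_c(G) \cong \mathrm{C}_p$ and a regularity/$p$-abelian argument via Lemmas \ref{regular1} and \ref{yajiaohuan} contradicts $\mathrm{cl}(G) \geq p$ — gives $d(G') \leq p-1$.

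The main obstacle I anticipate is the precise justification that the generator $g$ acts on $G' \cong \mathrm{C}_p^d$ with a \emph{single} Jordan block of size exactly $d$, equivalently that $\mathrm{cl}(G) = d+1$ rather than something smaller: one must rule out that $G'$ picks up ``extra'' generators not visible in the lower central series, and this is exactly what Lemma \ref{Pxingzhi1}(1)+(2) and the $\mathcal{DC}$ hypothesis are for — the $\mathcal{DC}$ condition says the derived subgroups of all two-generator subgroups $\langle a_i, b_i\rangle$ are totally ordered, which prevents $G'$ from being ``wider'' than the lower central series chain. Making that comparison watertight, and simultaneously controlling the order of the action of $g$ on $G'$ so that the characteristic-$p$ polynomial identity applies, is the technical heart of the argument.
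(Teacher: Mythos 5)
Your setup is sound and closely parallels the paper's: reduce to $\Phi(G')=1$ so that $G'$ is elementary abelian, use Lemma \ref{Pxingzhi1} to see that the lower central quotients are cyclic of order $p$ and that $G'$ is a cyclic module under the action of a generator $b\in G\setminus A$ (a single Jordan block of size $d=\mathrm{cl}(G)-1$), and use $b^p\in A$ to control that action. The genuine gap is in the last quantitative step. From ``$b^p$ centralizes $G'$'' you get, writing $T$ for the nilpotent operator $x\mapsto [x,b]$ on $G'\cong \mathbb{F}_p^{\,d}$, only the relation $(1+T)^p=1$, i.e.\ $T^p=0$; a single Jordan block of size exactly $p$ satisfies this and has order exactly $p$, so linear algebra alone yields $d\leqslant p$, not $d\leqslant p-1$. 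Your proposed patch for excluding $d=p$ does not work: if $d=p$ then $\mathrm{cl}(G)=p+1\geqslant p$, so Lemma \ref{regular1} (which requires $\mathrm{cl}(G)<p$) cannot be invoked to get regularity, and without regularity you cannot conclude $p$-abelianness and feed Lemma \ref{yajiaohuan}; the argument is circular, since $\mathrm{cl}(G)<p$ is essentially what you are trying to prove.

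The missing idea --- and the one the paper uses --- is to exploit the \emph{stronger} relation $[a,b^p]=1$ for a generator $a\in A\setminus\Phi(G)$ (not merely for elements of $G'$). Since $G'\leqslant A$ and $A$ is abelian, $G$ is metabelian, and Lemma \ref{metabelian formula} gives $[a,b^p]=\prod_{i=1}^{p}e_i^{\binom{p}{i}}$ with $e_1=[a,b]$, $e_{i+1}=[e_i,b]$. Because $\exp(G')=p$ after the reduction, the terms with $1\leqslant i\leqslant p-1$ die (as $p\mid\binom{p}{i}$), leaving $e_p=1$. In your operator language this is the identity $\bigl((1+T)^p-1\bigr)/T=T^{p-1}$ in characteristic $p$ applied to the module generator $[a,b]$: you obtain $T^{p-1}[a,b]=0$ rather than $T^p=0$, and since $[a,b]$ generates $G'$ as an $\mathbb{F}_p[T]$-module this forces the Jordan block to have size at most $p-1$, i.e.\ $d(G')\leqslant p-1$. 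So your framework is correct, but the decisive ``off by one'' improvement comes from commutating $b^p$ against a generator lying outside $G'$, which your proposal does not use.
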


\begin{proof}
Let $\overline{G}=G/\Phi(G')$. Then $d(\overline{G}')=d(G')$. Hence we can assume that $\Phi(G')=1$. By Lemma~\ref{Pxingzhi1}$(2)$ assume that $d(G)=2$.
Let $A$ be an abelian maximal subgroup of $G$. Take $a\in A\setminus \Phi(G)$ and $b\in G\setminus A$. Then
 $G=\lg a,b\rg.$ Let $e_{1}=[a,b]$ and $e_{i+1}=[e_{i},b]$, where $i\geqslant1.$  By Lemma~\ref{Pxingzhi1}$(1)$, $K_{i+1}(G)=\lg e_i\rg K_{i+2}(G)$ for $i\geqslant 1$. In particular, $G'=\lg e_1,e_2,\cdots,e_i,\cdots\rg$. Notice that $b^p\in A$. Then $[a,b^p]=1$.
 On the other hand, by Lemma \ref{metabelian formula},  $[a,b^{p}]=\prod\limits_{i=1}^{p}e_i^{p\choose i}=e_{p}$. Thus $e_i=1$ for $i\geqslant p$. It follows that $|G'|\leqslant p^{p-1}$.
 Therefore $d(G')\leqslant p-1$.
\end{proof}

The following Theorem \ref{thm:=main} and \ref{thm=d(G')=p->G' ele-abel} give the proof of Theorem \ref{thm1.2}.

\begin{thm}\label{thm:=main}
Let $G$ be a finite non-abelian ${\mathcal DC}$ $p$-group. Then $|G':\mho_1(G')|\leqslant p^p$. In particular, $d(G')\leqslant p$.
\end{thm}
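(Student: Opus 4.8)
The plan is to reduce to the quotient $\overline G = G/\mho_1(G')$ and then bound $d(\overline G') = d(G')$, so throughout I may assume $\mho_1(G') = 1$, i.e.\ $\exp(G') = p$. By Lemma~\ref{Pxingzhi1}(1) each factor $K_i(G)/K_{i+1}(G)$ is cyclic, hence of order $1$ or $p$ once $\exp(G') = p$; consequently $|G'| = p^{c-1}$ where $c = {\rm cl}(G)$, and $d(G') \le c - 1$. So it suffices to show $c \le p+1$. I would argue by contradiction: suppose $c \ge p+2$. By Lemma~\ref{Pxingzhi1}(2), applied with $i = 2$, there exist $a \in G$, $b \in G$ with $K_2(G) = G' = \lg a,b\rg'$, so in fact $G = \lg a,b\rg Z(G)$ modulo the elements not affecting the derived subgroup; more precisely the two-generator subgroup $H = \lg a,b\rg$ satisfies $H' = G'$, hence $K_i(H) = K_i(G)$ for all $i$ by Lemma~\ref{lem:=G'=H'G_{3}}, so $H$ has the same nilpotency class $c$ as $G$ and is again in ${\mathcal DC}$ by Lemma~\ref{zishangyichuan}. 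Thus I may assume $d(G) = 2$ from the outset.

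Now with $d(G) = 2$ and $d(G') \le c-1$, Lemma~\ref{lem=blackburn} (Blackburn) forces $G'$ to be abelian, so $G$ is metabelian. Since $\exp(G') = p$, I would like to invoke the $p$-abelian machinery: if moreover $c < p$ we would be done trivially, so the relevant range is $c \ge p$. Here is where I expect to use Lemma~\ref{metabelian formula} directly on a concrete pair of generators. Write $G = \lg a,b\rg$, set $e_1 = [a,b]$ and $e_{i+1} = [e_i,b]$; by Lemma~\ref{Pxingzhi1}(1) the $e_i$ generate $G'$ and $K_{i+1}(G) = \lg e_i, K_{i+2}(G)\rg$. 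The metabelian collection formula gives
\[
[a, b^{p}] = \prod_{i=1}^{p} e_i^{\binom{p}{i}},
\]
and since $\exp(G') = p$ all the middle binomial coefficients $\binom{p}{i}$ with $1 \le i \le p-1$ vanish mod $p$, leaving $[a,b^p] = e_p$. The key point is then that $[a,b^p]$ and, by a symmetric computation, $[a^p,b]$, together with the analogous higher relations obtained by replacing $a,b$ by $e_j$'s, must lie "deep" in the lower central series — quantitatively, I expect to show $\mho_1(G)$ centralizes enough of $G'$ that $e_{p} \in K_{p+1}(G)$ is forced to interact with the relation $e_i = 1$ for large $i$, pinning down $c \le p+1$ and hence $|G'| \le p^{p}$, i.e.\ $|G':\mho_1(G')| \le p^{p}$ in the original group.

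The main obstacle is the last step: controlling the class of a two-generator metabelian $p$-group of exponent-$p$ derived subgroup. Lemma~\ref{daoqunshengchengyuan1} handles this cleanly when $G$ has an abelian maximal subgroup (there $b^p \in A$ gives $[a,b^p]=1$, forcing $e_p = 1$ and $c \le p$), but in general $b^p \notin Z(G)$ and one only knows $b^p$ lies in a proper subgroup. I would try to recover enough by applying Lemma~\ref{metabelian formula} to all pairs $(e_j, b)$ and $(a, b^{p})$ simultaneously and using that $\exp(G') = p$ kills binomial coefficients, extracting the relation $[e_j, b^p] = e_{j+p-1}^{\text{(unit)}} \cdot(\text{deeper terms})$; combined with the fact that $\mho_1(G')=1$ and $d(G')\le c-1$ (so $G'$ is elementary abelian of rank $c-1$), a dimension/degree count on the associated graded Lie ring should give $c - 1 \le p$, equivalently the desired bound. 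An alternative, cleaner route is to observe that $G/K_{p+1}(G)$ has class $< p$, hence is regular (Lemma~\ref{regular1}) and, being two-generator with $\exp$ of its derived subgroup $\le p$, is $p$-abelian by Lemma~\ref{yajiaohuan}; then $[a^p,b] = [a,b]^p = 1$ in that quotient forces $\mho_1(G) \le C_G(G' \bmod K_{p+1})$, and pushing this back up should collapse the class — I would develop whichever of these two arguments closes the gap with the least case analysis.
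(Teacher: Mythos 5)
There is a genuine gap, in fact two. First, your application of Lemma~\ref{lem=blackburn} is invalid: that lemma requires \emph{both} $G$ and $G'$ to be $2$-generated, but $d(G')\leqslant 2$ is exactly the kind of bound the theorem is trying to establish and is not available to you ($d(G')\leqslant c-1$ is not enough). Worse, the conclusion you draw from it --- that $G'$ is abelian, so $G$ is metabelian --- is simply false for ${\mathcal DC}$ $p$-groups with $p\geqslant 5$: the paper's closing Example exhibits two-generator ${\mathcal DC}$ $p$-groups with $\exp(G')=p$ and $G'$ non-abelian. Since your entire computational strategy (Lemma~\ref{metabelian formula}, the $e_i$ calculus, Lemma~\ref{yajiaohuan}) is predicated on metabelianity, it cannot be repaired by a local fix. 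Second, even granting metabelianity, the decisive step --- showing ${\rm cl}(G)\leqslant p+1$ --- is never actually carried out; you describe two candidate arguments and explicitly leave open which one ``closes the gap.'' (One of them also mis-states the class of $G/K_{p+1}(G)$ as $<p$ when it is only $\leqslant p$, so Lemma~\ref{regular1} does not apply as written.) Your opening reductions (to $\mho_1(G')=1$ and to $d(G)=2$ via Lemma~\ref{Pxingzhi1}(2) and Lemma~\ref{lem:=G'=G'$ is a misprint; I mean Lemma~\ref{lem:=G'=H'G_{3}}) are sound and match what the paper does elsewhere, but they are the easy part.

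For comparison, the paper avoids all commutator calculus here. It takes a minimal counterexample, normalizes to $|G'|=p^{p+1}$ with $\mho_1(G')=1$, and chooses a maximal subgroup $M$ with $|M'|$ minimal. Applying Lemma~\ref{daoqunshengchengyuan1} to $G/M'$ (which has the abelian maximal subgroup $M/M'$) forces $|M'|\geqslant p^2$; then a $G$-normal subgroup $N\leq M'$ of order $p^2$ is centralized by some maximal subgroup $M_1$, the ${\mathcal DC}$ hypothesis gives $M'\leq M_1'$, and so $N\leq Z(M_1)\cap M_1'$, contradicting Lemma~\ref{Pxingzhi1}(4). You would need an argument of this structural kind (or a complete version of one of your sketched routes that does not assume metabelianity) to close the proof.
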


\begin{proof}
Assume $G$ is a counterexample of minimal order. Then $|G':\mho_1(G')|\geqslant p^{p+1}$. Take $N\unlhd G$, $\mho_1(G')\le N\le G'$ and $|G':N|=p^{p+1}$.  Let $\og=G/N$.
Then $\og$ is also a counterexample. By the minimality of $G$, $N$=1.
It follows that $|G'|=p^{p+1}$ and $\mho_1(G')=1$.

Let $M$ be a maximal subgroup of $G$ such that $|M'|$ has minimal order. Then $M'\unlhd G$. Let $\overline{G}=G/M'$ and $\overline{M}=M/M'$. Then $\overline{M}$ is an abelian
maximal subgroup of $\overline{G}$.
It follows by Lemma~\ref{daoqunshengchengyuan1} that $d(\overline{G}')\leqslant p-1$.
Obviously, $\Phi(\overline{G}')=1$. Thus $|\overline{G}'|\leqslant p^{p-1}$. Since $|G'|=p^{p+1}$, $|M'|\geqslant p^{2}$.
Take $N\leq M'$ with $|N|=p^{2}$ and $N\trianglelefteq G$. Then $|G/C_{G}(N)|\leqslant p$ by ${\rm N/C}$ Theorem. Hence there exists a maximal subgroup $M_{1}$ such that $M_{1}\leq C_{G}(N)$. Moreover, $N\leq Z(M_{1})$. Since $G\in {\mathcal DC}$, it follows by the minimality of $|M'|$ that $M'\leq M_1'$. Hence $N\leq M'_{1}$. Thus $N\leq Z(M_{1})\cap M'_{1}$.  This contradicts Lemma~\ref{Pxingzhi1}$(4)$.
\end{proof}

\begin{thm}\label{thm=d(G')=p->G' ele-abel}
Let $G$ be a finite non-abelian ${\mathcal DC}$ $p$-group. If $d(G')=p$, then $G'\cong {\rm C}_p^p$.
\end{thm}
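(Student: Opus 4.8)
The plan is to argue by contradiction on a counterexample $G$ of minimal order, mimicking the structure of the proof of Theorem~\ref{thm:=main}. So assume $d(G')=p$ but $\Phi(G')\ne 1$, i.e.\ $\exp(G')>p$. Passing to $G/\mho_1(\Phi(G'))$ or to a suitably chosen normal subgroup inside $\Phi(G')$, the minimality of $G$ should force $\Phi(G')$ to be as small and as central as possible; the natural target is $\Phi(G')\cong {\rm C}_p$ with $\Phi(G')\le Z(G)$, so that $|G'|=p^{p+1}$ and $\mho_1(G')=\Phi(G')$ is central of order $p$. Since $d(G')=p$, Lemma~\ref{Pxingzhi1}(1) forces ${\rm cl}(G)\geqslant p+1$, and indeed the central series must have all but one factor (above $G'$) of order $p$.

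Next I would exploit Lemma~\ref{Pxingzhi1}(2): write $G'$ and each $K_i(G)$ as $\langle a_i,b_i\rangle'$, and use Lemma~\ref{Pxingzhi1}(3), since $d(G')=p\geqslant 2$ would in fact give $d(G')\geqslant 2$ — but we want the stronger statement that $G'\cap Z(G)$ is small. More precisely, I expect to show $G'\cap Z(G)$ is cyclic of order $p$: the subgroup $\mho_1(G')=\Phi(G')$ lies in $Z(G)$ and has order $p$, and any strictly larger central subgroup of $G'$ would, by Lemma~\ref{lem=G' is a set of commutators}, produce two commutators generating a ${\rm C}_p^2\le G'\cap Z(G)$, hence two two-generator subgroups with incomparable (distinct order-$p$) derived subgroups, contradicting $G\in{\mathcal DC}$. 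So $|G'\cap Z(G)|=p$ and $\mho_1(G')=G'\cap Z(G)=K_c(G)$ where $c={\rm cl}(G)$.

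Now I would look at a maximal subgroup $M$ with $|M'|$ minimal, as in Theorem~\ref{thm:=main}. Then $M'\trianglelefteq G$, $\overline{M}=M/M'$ is an abelian maximal subgroup of $\overline{G}=G/M'$, so Lemma~\ref{daoqunshengchengyuan1} gives $d(\overline{G}')\leqslant p-1$, hence $|\overline{G}'|\leqslant p^{p-1}$ (as $\Phi(\overline G')=1$ because $\mho_1(G')\le M'$ once we check $\mho_1(G')$ is forced into $M'$). From $|G'|=p^{p+1}$ we deduce $|M'|\geqslant p^2$. Pick $N\le M'$ with $N\trianglelefteq G$ and $|N|=p^2$; by N/C there is a maximal subgroup $M_1\le C_G(N)$, so $N\le Z(M_1)$. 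By minimality of $|M'|$ and $G\in{\mathcal DC}$ we get $M'\le M_1'$, hence $N\le Z(M_1)\cap M_1'$. This is the desired contradiction, but the obstacle is that Lemma~\ref{Pxingzhi1}(4) requires $\exp(M_1')=p$, which need not hold a priori; so the real work — and the step I expect to be hardest — is to replace Lemma~\ref{Pxingzhi1}(4) with a version valid when only $\mho_1(G')$ is central of order $p$: one must show that a subgroup $H\le G$ with $d(H')$ large cannot have $|Z(H)\cap H'|$ large, by combining Lemma~\ref{Pxingzhi1}(1)--(3) applied inside $M_1$ (which is itself a ${\mathcal DC}$ $p$-group by Lemma~\ref{zishangyichuan}) with the bound $|M_1'|$ inherits. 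If that central-intersection bound goes through, the contradiction $N\le Z(M_1)\cap M_1'$ with $|N|=p^2$ closes the argument and forces $\Phi(G')=1$, i.e.\ $G'\cong {\rm C}_p^p$.
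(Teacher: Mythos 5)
Your reduction to $|G'|=p^{p+1}$ with $|\Phi(G')|=p$ matches the paper's opening move, but from there the argument does not go through, and you have in fact flagged the fatal step yourself. Transplanting the proof of Theorem~\ref{thm:=main} requires two things you do not supply. First, to get $|M'|\geqslant p^2$ from $d(\overline{G}')\leqslant p-1$ you need $\Phi(\overline{G}')=1$, i.e.\ $\mho_1(G')\leq M'$, which is exactly what fails to be automatic once $\Phi(G')\neq 1$; without it you only get $|\overline{G}'|\leqslant p^{p}$ and hence $|M'|\geqslant p$, which is useless. Second, the final contradiction in Theorem~\ref{thm:=main} comes from Lemma~\ref{Pxingzhi1}(4), which needs $\exp$ of the derived subgroup to be $p$ --- precisely the hypothesis a counterexample to the present theorem violates. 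Your proposed substitute, that $M_1'\cap Z(M_1)$ is cyclic, cannot be extracted from Lemma~\ref{Pxingzhi1}(3) either, since that lemma (via Lemma~\ref{lem=G' is a set of commutators}) requires the derived subgroup to be $2$-generated, whereas here $d(G')=p\geqslant 3$ in all the nontrivial cases; the same objection applies to your earlier claim that a central ${\rm C}_p^2$ inside $G'$ must be generated by two commutators. So the proof is a conditional sketch resting on an unproved ``central-intersection bound.''

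The paper closes the gap by an entirely different mechanism: after the same reduction it pins down $\exp(K_3(G))=p$ (via Lemma~\ref{lem=blackburn} for $p=2$ and regularity of $G'$ for $p>2$), shows ${\rm cl}(G)\in\{p+1,p+2\}$, and in each case builds an explicit two-generator subgroup $H=\lg a^p,b\rg$ with $b$ chosen in a centralizer $C_G(K_i(G)/K_{i+2}(G))$ or $C_G(K_p(G))$. The metabelian commutator formula (Lemma~\ref{metabelian formula}) and regularity (Lemmas~\ref{regular1}, \ref{regular2}) then force $H'$ to be cyclic of order $p$ but located in the wrong place relative to the lower central series, contradicting Lemma~\ref{Pxingzhi1}(2). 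If you want to complete your write-up you would essentially have to reinvent this construction; the route through a minimal-$|M'|$ maximal subgroup does not appear to close.
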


\begin{proof} Assume $G$ is a counterexample of minimal order. By considering the quotient group of $G$, we can get $|G'|=p^{p+1}$ and $|\Phi(G')|=p$. By Lemma~\ref{Pxingzhi1}$(2)$ assume that $d(G)=2$.
If $p=2$, then $d(G')=2$. It follows from  Lemma \ref{lem=blackburn} that $G'$ is abelian. Moreover, $G'\cong {\rm C}_4\times {\rm C}_2$.
In particular, $|G':\O_1(G')|=2$.  Hence $\exp(K_3(G))=2$.
If $p>2$, then $G'$ is regular since $|\Phi(G')|=p$. Thus $|G':\O_1(G')|=|\mho_1(G')|\leqslant |\Phi(G')|=p$. We still have $\exp(K_3(G))=p$.
Let $\og=G/\Phi(G')$.
It follows by Lemma~\ref{Pxingzhi1}$(1)$ that ${\rm cl}(\og)=p+1$.
Hence $K_{p+2}(G)\leq \Phi(G')$. Thus ${\rm cl}(G)=p+2$ or ${\rm cl}(G)=p+1$.

\vspace{0.20cm}
\f\textbf{Case 1.} ${\rm cl}(G)=p+2$

\vspace{0.20cm}
In this case, we have $K_{p+2}(G)=\Phi(G')$. Since $|G'|=p^{p+1}$ and ${\rm cl}(G)=p+2$, we have $|K_i(G)/K_{i+1}(G)|=p$ for all $2\leqslant i\leqslant p+2$.

Let $M_i=C_G(K_i(G)/K_{i+2}(G)):=\{g\in G\ |\ [K_i(G),g]\leq K_{i+2}(G)\}$, where $2\leqslant i\leqslant p+1$.
Then, by ${\rm N/C}$ Theorem, $M_i$ is a maximal subgroup of $G$. Notice that a finite $p$-group can not be a union of $p$ proper subgroups. Take

\begin{center}
$a\in G\setminus (\bigcup\limits_{i=2}^{p+1}M_i)$,\ \ $b\in M_{p+1}\setminus \Phi(G)$.
\end{center}

By Lemma~\ref{Pxingzhi1}$(2)$ and the minimality of $G$ we have $d(G)=2$. Then $G=\lg a,b\rg$.
Moreover, $K_i(G)=\lg [a,b,(i-2)a], K_{i+1}(G)\rg$, where $2\leqslant i\leqslant p+2$.
Let $H=\lg a^p,b\rg$.  Clearly, $\overline{G}=G/\Phi(G')=G/K_{p+2}(G)$ is matabelian. By Lemma \ref{metabelian formula}, we have

\begin{center}
$[a^p,b]\equiv [a,b]^{p\choose 1}\cdots[a,b,(i-1)a]^{p\choose i}\cdots [a,b,(p-1)a]^{p\choose p}~(\mod K_{p+2}(G)).$
\end{center}

\f Hence $[a^p,b]\equiv [a,b,(p-1)a]~(\mod K_{p+2}(G))$. That means, $[a^p,b]\in K_{p+1}(G)\setminus K_{p+2}(G)$. Since $b\in M_{p+1}=C_G(K_{p+1}(G))$, $[a^p,b,b]=1$. Notice that ${\rm cl}(\lg K_{p+1}(G),a\rg)=2$. We have $[a^p,b,a^p]=[a^p,b,a]^p=1$. Thus $H'=\lg [a^{p},b]\rg$.
Since $\exp(K_3(G))=p$, $|H'|=p$. But $H'\nleq K_{p+2}(G)$.
This contradicts Lemma~\ref{Pxingzhi1}$(2)$.

\vspace{0.20cm}
\f\textbf{Case 2.} ${\rm cl}(G)=p+1$

\vspace{0.20cm}
In this case, we assert that $p\geqslant 3$. If $p=2$, then $G'\cong {\rm C}_4\times {\rm C}_2$. By Lemma~\ref{Pxingzhi1}$(1)$, $K_3(G)\ne \mho_1(G')$. Let $\overline{G}=G/\O_1(G')$.
Then $K_3(G) \leq \O_1(G')$. Hence $\O_1(G')=K_3(G)\mho_1(G')$. Since $|\mho_1(G')|=2$ and $\mho_1(G')\unlhd G$, $\mho_1(G')\leq Z(G)$. By Lemma~\ref{Pxingzhi1}$(3)$, $\O_1(G')\nleq Z(G)$. Thus $K_3(G)\nleq Z(G)$. That means ${\rm cl}(G)=4$. This contradiction ${\rm cl}(G)=3$.

Since ${\rm cl}(G)=p+1$,  by Lemma~\ref{Pxingzhi1}$(1)$ and Lemma~\ref{lem=Blackburn},

\begin{center}
$K_2(G)/K_3(G)\cong {\rm C}_{p^2}$ and $K_i(G)/K_{i+1}(G)\cong {\rm C}_p$ for $3\leqslant i\leqslant p+1$.
\end{center}

\f In particular, $\mho_1(G')\nleq K_3(G)$ and $|K_{p}|=p^{2}$.

Let $M=C_G(K_p(G))$.
Then, by ${\rm N/C}$ Theorem, $M$ is a maximal subgroup of $G$.  Take $a\in G\setminus M$ and $b\in M\setminus \Phi(G)$. Then $G=\lg a,b\rg$.
Let $H=\lg a^p,b\rg$ and $\og=G/K_p(G)$. Then ${\rm cl}(\og)<p$ and hence $\og$ is regular by Lemma \ref{regular1}.
Since $[\bar{a}, \bar{b}]^p\ne \bar{1}$, $[\bar{a}^p,\bar{b}]\neq 1$ by Lemma \ref{regular2}. Hence $[a^p,b]\notin K_p(G)$. On the other hand, it is easy to see that $[a^p,b]\in \mho_1(G')K_p(G)$. Thus $[a^p,b]=xy$, where $x\in \mho_1(G')$, $y\in K_p(G)$ and $x\ne 1$. Since $|\mho_1(G')|=p$ and $\mho_1(G')\unlhd G$, $\mho_1(G')\leq Z(G)$. Thus $x\in Z(G)$. Since $b\in C_G(K_p(G))$ and $y\in K_p(G)$, $[b,y]=1$. Notice that ${\rm cl}(\lg K_{p}(G),a\rg)=2$. We have $[y,a^p]=[y,a]^p=1$. So $H'=\lg xy\rg$. That means $|H'|=p$. Hence $|H'|\leqslant |K_{p}(G)|$. It follows by Lemma~\ref{Pxingzhi1}$(2)$ that $H'\leq K_p(G)$. Thus $x\in K_p(G)$. Since
$p>2$, $x\in K_3(G)$. This forced $\mho_1(G')\leq K_3(G)$. This contradicts $\mho_1(G')\nleq K_3(G)$.
\end{proof}

\begin{rem}
By Theorem {\rm \ref{thm=d(G')=p->G' ele-abel}}, we see that if  $d(G')=p$ for a $\mathcal{DC}$-group $G$, then $\exp(G')=p$. However, when $d(G')<p$,  $\exp(G')$  can be as large as possible. For example, Let $G$ be a $p$-group of maximal class with an abelian maximal subgroup and $|G|= p^{k(p-1)+2}$. Then $G\in \mathcal{DC}$, $d(G')=p-1$ and $\exp(G')=p^k$.
For the structure of $p$-group of maximal class with an abelian maximal subgroup, the readers is suggested to see {\rm\cite{Bla}}.

\end{rem}

{\it The proof of Theorem {\rm \ref{DC2-group}}}:
$(\Rightarrow)$ Assume $G$ is non-abelian without loss of generality. It follows by Theorem \ref{thm:=main} that $d(G')\leqslant 2$. If $d(G')=2$, then $G'\cong {\rm C}_2^2$ by Theorem \ref{thm=d(G')=p->G' ele-abel}. Hence ${\rm cl}(G)\leqslant 3$. By Lemma~\ref{Pxingzhi1}$(3)$, we have ${\rm cl}(G)=3$.

\vspace{0.2cm}
$(\Leftarrow)$ If $G'$ is cyclic, then $G\in {\mathcal DC}$ by Theorem \ref{lizi}$(1)$. If $G'\cong {\rm C}_{2}^{2}$ and ${\rm cl}(G)=3$, then $|K_{3}(G)|=2$. Let $H$ be a non-abelian subgroup of $G$. If $H'\ne K_3(G)$, then $G'=H'K_3(G)$. It follows by Lemma \ref{lem:=G'=H'G_{3}} that $H'=G'$. It implies that ${\rm DS}(G)=\{ 1,K_3(G),G'\}$. So $G\in {\mathcal DC}$. \qed

\begin{cor}\label{lem=s.n. conditon of 2 gen DC 2 grp}
Let $G$ be a two-generator finite non-abelian $2$-group. Then $G\in {\mathcal DC}$ if and only if $G'$ is cyclic or $G'\cong {\rm C}^{2}_{2}$.
In particular, the groups with $d(G)=2$ and $G'\cong {\rm C}_{2}^{2}$
were classified up to isomorphism in {\rm \cite[Theorem 4.6]{minimal non-abelian}}.
\end{cor}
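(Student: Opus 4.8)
The plan is to read this off Theorem~\ref{DC2-group}, which already characterizes $\mathcal{DC}$ $2$-groups among all finite $2$-groups. Under the extra hypothesis $d(G)=2$, the clause ``$G'\cong {\rm C}_2^2$ and ${\rm cl}(G)=3$'' should collapse to just ``$G'\cong {\rm C}_2^2$'', because a two-generator $2$-group with $G'\cong {\rm C}_2^2$ is forced to have nilpotency class $3$. So the whole corollary reduces to one short central-series computation.

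For the forward implication there is nothing to prove: if $G\in\mathcal{DC}$, then Theorem~\ref{DC2-group} gives that $G'$ is cyclic or $G'\cong {\rm C}_2^2$. For the converse, if $G'$ is cyclic then $G\in\mathcal{DC}$ by Theorem~\ref{lizi}$(1)$, so assume $G'\cong {\rm C}_2^2$; it then suffices to verify ${\rm cl}(G)=3$ and invoke Theorem~\ref{DC2-group}. First I would exclude ${\rm cl}(G)=2$: in a class-$2$ group the commutator map is bi-additive, so writing $G=\lg a,b\rg$ one gets $G'=\lg[a,b]\rg$, which is cyclic, contradicting $G'\cong {\rm C}_2^2$. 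Hence $K_3(G)\ne 1$. Since $K_3(G)=[G',G]$ is a proper subgroup of $G'$ by nilpotency and $|G'|=4$, we get $|K_3(G)|=2$; applying the same reasoning to $K_4(G)=[K_3(G),G]$ gives $K_4(G)=1$, so ${\rm cl}(G)=3$ and $G\in\mathcal{DC}$ by Theorem~\ref{DC2-group}. The closing ``in particular'' sentence is merely a pointer to \cite[Theorem 4.6]{minimal non-abelian} and requires no argument.

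I expect no genuine obstacle here; the only point needing care is the short class computation, which can alternatively be phrased by noting that $G/K_3(G)$ is a two-generator class-$\leqslant 2$ group, hence has cyclic derived subgroup $G'/K_3(G)$, which together with $|G'|=4$ forces $|K_3(G)|=2$ and then $K_4(G)=1$.
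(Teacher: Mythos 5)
Your proposal is correct and matches the paper's (implicit) route: the corollary is stated as an immediate consequence of Theorem~\ref{DC2-group}, and the only content to supply is exactly your observation that a two-generator $2$-group with $G'\cong {\rm C}_2^2$ cannot have class $2$ (since a two-generator class-$2$ group has cyclic derived subgroup) and hence has class exactly $3$. Your central-series computation establishing $|K_3(G)|=2$ and $K_4(G)=1$ is sound, so nothing is missing.
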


Obviously, a finite ${\mathcal DC}$ $2$-group is metabelian. It is not difficult to get, by Lemma~\ref{lem=blackburn}, \ref{Pxingzhi1}$(2)$ and Theorem \ref{thm:=main},~\ref{thm=d(G')=p->G' ele-abel}, that a finite non-abelian ${\mathcal DC}$ $3$-group is also metabelian. However, the following example show that there exists finite non-abelian ${\mathcal DC}$ $p$-groups are non-metabelian for $p\geqslant 5$.

\begin{exam}
If a group $G$ is one of the following groups, then $G$ is a ${\mathcal DC}$ $p$-group with $G'$ is non-abelian.

\medskip
$(1)$ $G=\lg a,x; a_{i}\ |\ x^{p}=a^{p}=a^{p}_{i}=1,[x,a]=a_{1},[a_{j},a]=a_{j+1},[a_{1},a_{2}]=[x,a_{1}]
=[a_{2},x]=[a_{3},x]=a_{5}\rg$, where $1\leqslant i\leqslant 5$, $1\leqslant j\leqslant 4$ and $p\geqslant 7$, other commutators are 1.

\medskip
$(2)$ $G=\lg a_{1},a_{2};a_{3},a_{4},a_{5}\ |\ a_{1}^{5^2}=a_{3}^{5^2}=a_{2}^5=a_{4}^5=a_{5}^5=1,
[a_{2},a_{1}]=a_{3},[a_{3},a_{2}]=a_{4},[a_{4},a_{2}]=a_{5},[a_{5},a_{2}]=a_{1}^5,
[a_{1},a_{4}]=[a_{3},a_{4}]=[a_{1},a_{5}]=a_{3}^5,[a_{1},a_{3}]=[a_{3},a_{5}]=
[a_{4},a_{5}]=1\rg.$
\end{exam}

\begin{proof} By a simple check we get that $G'$ is non-abelian for the groups $(1)$ and $(2)$, and they have the following same properties.

\vspace{0.1cm}
$(1)$ $|G|=p^{7}$, $Z(G)$ is cyclic and $d(G)=2$.

\vspace{0.1cm}
$(2)$ $G$ has a maximal subgroup $M_{0}$ with $|M_{0}'|=p$.

\vspace{0.1cm}
$(3)$ $Z(M)$ is cyclic for each maximal subgroup $M\neq M_{0}$.

Based on these observation above, we get that $G\in {\mathcal DC}$ by a similar argument as that of Theorem \ref{lizi}$(3)$.
\end{proof}

At last, we propose the following

\vspace{0.25cm}
\textbf{Problem.} Study the derived length ${\rm dl}(G)$ for a finite ${\mathcal DC}$ $p$-group $G$. Note that if $G\in {\mathcal DC}$ and $p\leqslant 3$, then ${\rm dl}(G)\leqslant 2$.

\end{document}